\newtheorem{theorem}{Theorem}[section]
\newtheorem{lemma}[theorem]{Lemma}
\theoremstyle{definition}
\newtheorem{remark}[theorem]{Remark}
\newcommand{\what}{\widehat}
\newcommand{\R}{\mathbb R}%
\newcommand{\C}{\mathbb C}%
\newcommand{\N}{\mathbb N}%
\newcommand{\X}{\mathbb X}%
\newcommand{\La}{\mathcal L}%
\newcommand{\Hc}{\mathcal H}%
\newcommand{\Qc}{\mathcal Q}%
\newcommand{\Pc}{\mathcal P}%
\newcommand{\F}{\mathscr F}%
\numberwithin{equation}{section}
\renewcommand\subsubsection{\@secnumfont}{\bfseries}%
\renewcommand\subsubsection{\@startsection{subsubsection}{3}
  \z@{.5\linespacing\@plus.7\linespacing}{-.5em}%
  {\normalfont\bfseries}}
\begin{document}

\title[Iterated convolution inequalities]{Iterated convolution inequalities on $\mathbb{R}^d$ and Riemannian Symmetric Spaces of non-compact type}

\author[Utsav Dewan]{Utsav Dewan}
\address{Stat-Math Unit, Indian Statistical Institute, 203 B. T. Rd., Kolkata 700108, India}
\email{utsav\_r@isical.ac.in\:,\: utsav97dewan@gmail.com}

\subjclass[2020]{Primary 43A85; Secondary 35R09}

\keywords{Convolution; Euclidean space; Riemannian Symmetric Spaces of non-compact type; Integro-differential equation}

\begin{abstract}
In a recent work (Int Math Res Not 24:18604-18612, 2021), Carlen-Jauslin-Lieb-Loss studied the  convolution inequality $f \ge f*f$ on $\mathbb{R}^d$ and proved that the real integrable solutions of the above inequality must be non-negative and satisfy the non-trivial bound $\int_{\R^d} f \le \frac{1}{2}$\:. Nakamura-Sawano then generalized their result to $m$-fold convolution (J Geom Anal 35:68, 2025). In this article, we replace the monomials by genuine polynomials and study the real-valued solutions $f \in L^1(\mathbb{R}^d)$ of the iterated convolution inequality  
\begin{equation*} 
f \ge \displaystyle\sum_{n=2}^N a_n \left(*^n f\right) \:,
\end{equation*}
where $N \ge 2$ is an integer and for $2 \le n \le N$, $a_n$ are non-negative integers with at least one of them positive. We prove that $f$ must be non-negative and satisfy the non-trivial bound $\int_{\mathbb{R}^d} f \le t_{\mathcal{Q}}\:$ where $\mathcal{Q}(t):=t-\displaystyle\sum_{n=2}^N a_n\:t^n$ and $t_{\mathcal{Q}}$ is the unique zero of $\mathcal{Q}'$ in $(0,\infty)$. We also have an analogue of our result for Riemannian Symmetric Spaces of non-compact type. Our arguments involve Fourier Analysis and Complex analysis. We then apply our result to obtain an a priori estimate for solutions of an integro-differential equation which is related to the physical problem of the ground state energy of the Bose gas in the classical Euclidean setting.

\end{abstract}

\maketitle
\tableofcontents

\section{Introduction}
While conducting a recent study on the physical problem of the ground state energy of the Bose gas in \cite{CJL}, Carlen-Jauslin-Lieb were propelled to look at the real, integrable solutions of the following convolution inequality almost everywhere on $\R^d$:
\begin{equation} \label{two-conv}
f \ge f * f\:.
\end{equation}
While it is easy to observe that the equality case of (\ref{two-conv}) fails to produce any non-trivial integrable solution, in \cite{CJLL} Carlen-Jauslin-Lieb-Loss produced a family of non-trivial radial solutions in $L^1(\R^d)$ for the inequality (\ref{two-conv}) by considering
\begin{equation} \label{poisson-ex}
f_{a,t} = \widehat{g_{a,t}}\:,
\end{equation}
the Fourier transform of the functions
\begin{equation*}
g_{a,t}(x):=ae^{-2\pi t\|x\|}\:, \text{ for } 0 < a \le 1/2\:,\: t>0\:.
\end{equation*} 
They observed that the functions $f_{a,t}$ are non-negative and satisfy
\begin{equation*} 
\int_{\R^d} f_{a,t} \le \frac{1}{2}\:.
\end{equation*}
Then the authors proceeded to show that the above  examples $f_{a,t}$ are surprisingly typical of {\em all} integrable solutions by proving \cite[Theorem 1]{CJLL}: any real $f \in L^1(\R^d)$ satisfying the convolution inequality (\ref{two-conv}) is non-negative with
\begin{equation} \label{two-conv-int-bound}
\int_{\R^d} f \le \frac{1}{2}\:.
\end{equation}
We note that by simply integrating out the inequality (\ref{two-conv}), one only gets the a priori bound
\begin{equation*} 
\int_{\R^d} f \le 1\:,
\end{equation*}
and thus (\ref{two-conv-int-bound}) is non-trivial. The non-negativity of $f$ is also special to the case $L^1(\R)$ as for any $p>1$, one can get counter-examples belonging to $L^p(\R)$ by simply considering the Fourier transform of the indicator function of symmetric intervals centred at the origin. 

Recently, Nakamura-Sawano generalized the above result of Carlen et. al. by studying the integrable solutions of the $m$-fold convolution inequality on $\R^d$ for $m \ge 2$:
\begin{equation} \label{m-conv}
f \ge \underbrace{f * \cdots *f }_{m-times}\:.
\end{equation}
Under the additional condition that $\int_{\R^d} f \ge 0$, if $m$ is odd, they showed that \cite[Theorems 1.2, 1.4]{NS}: any real $f \in L^1(\R^d)$ satisfying the convolution inequality (\ref{m-conv}) is non-negative with
\begin{equation} \label{m-conv-int-bound}
\int_{\R^d} f \le m^{-\frac{1}{m-1}}\:.
\end{equation} 

An inquisitive mind is naturally led to inquire what happens when the iterated convolution inequalities (\ref{two-conv}) or (\ref{m-conv}) are replaced by a genuine polynomial:
\begin{equation} \label{polynomial-conv}
f \ge \displaystyle\sum_{n=2}^N a_n \left(*^n f\right) \:,
\end{equation}
where $N \ge 2$ is an integer, for $2 \le n \le N$, $a_n$ are non-negative integers with at least one of them positive and 
\begin{equation*}
*^n f = \underbrace{f * \cdots *f }_{n-times}\:.
\end{equation*}

In this article, we study the convolution inequality (\ref{polynomial-conv}) on $\R^d$ and Riemannian Symmetric Spaces of non-compact type. Intimately connected to our analysis of the inequality (\ref{polynomial-conv}), will be the polynomial
\begin{equation} \label{Q-defn}
\Qc(t)=t-\displaystyle\sum_{n=2}^N a_n\:t^n\:,
\end{equation}
where $a_n$ are as in (\ref{polynomial-conv}). It is easy to observe that $\Qc$ attains a unique positive maximum on $(0,\infty)$ say at $t_{\Qc}$ and  
\begin{equation} \label{tQ_prop1}
t_{\Qc} \in (0,1)\:\:\text{ with }\:\Qc\left(t_{\Qc}\right)<t_{\Qc}\:.
\end{equation}
Moreover, by the Descartes' rule of signs, the polynomial $\Qc'$ has a unique zero in $(0,\infty)$ and thus is given by $t_{\Qc}$.

Upon a brief glance at the inequality (\ref{polynomial-conv}), one may inquire whether the class of real-valued $f \in L^1(\R^d)$ satisfying the iterated convolution inequality (\ref{polynomial-conv}) to begin with, is sufficiently rich. Our first result (Theorem \ref{thm}) shows that it is indeed the case for $G$ a second countable, locally compact, Hausdorff topological group and in fact, one can have a concrete realization of the structure of such solutions. In the following statement $L^1(G)$ is considered with respect to a fixed left Haar measure.
\begin{theorem} \label{thm}
Let $N \ge 2$ be an integer, for $2 \le n \le N$, $a_n$ be non-negative integers with at least one of them positive, $\Qc$ be as in (\ref{Q-defn}) and $t_{\Qc}$ be the unique zero of $\Qc'$ in $(0,\infty)$. Let $G$ be a second countable, locally compact, Hausdorff topological group. Now consider any non-negative $\psi \in L^1(G)$ with 
\begin{equation*}
\|\psi\|_{L^1(G)} \le \Qc\left(t_{\Qc}\right)\:.
\end{equation*}
Define $\Psi_j$, $j \in \N \cup \{0\}$, inductively by,
\begin{eqnarray*}
&&\Psi_0 := \psi\:, \\
&&\Psi_{j+1} := \psi \:+\:\displaystyle\sum_{n=2}^N a_n \left(*^n \Psi_j\right) \:,\:j \in \N \cup \{0\}\:.
\end{eqnarray*}
Then $\{\Psi_j\}_{j=0}^\infty$ converges to an element  $\Psi$ in the topology of $L^1(G)$ which satisfies 
\begin{equation*}
\Psi \ge 0\:,\: \|\Psi\|_{L^1(G)} \le t_{\Qc}\text{ and }\psi = \Psi - \displaystyle\sum_{n=2}^N a_n \left(*^n \Psi\right)\:.
\end{equation*}
\end{theorem}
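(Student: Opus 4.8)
The plan is to run the given iteration as a monotone scheme and to control it by the scalar recursion obtained after integrating. First I would check, by induction on $j$, that each $\Psi_j$ is non-negative and that the sequence is pointwise (a.e.) non-decreasing: $\Psi_0=\psi\ge 0$, and if $0\le\Psi_{j-1}\le\Psi_j$ then, since the convolution of non-negative functions is non-negative and monotone, $*^n\Psi_{j-1}\le *^n\Psi_j$ for each $n$, whence $\Psi_j\le\Psi_{j+1}$ (the base step $\Psi_0\le\Psi_1$ being immediate from $a_n\ge 0$). Here each $\Psi_j$ lies in $L^1(G)$ by Young's inequality $\|f*g\|_{L^1(G)}\le\|f\|_{L^1(G)}\|g\|_{L^1(G)}$.

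Next I would pass to the scalar level. Writing $b_j:=\|\Psi_j\|_{L^1(G)}=\int_G\Psi_j$ (using $\Psi_j\ge 0$) and invoking $\sigma$-finiteness of $G$ (second countability) together with left-invariance of Haar measure, so that Tonelli gives $\int_G(*^n\Psi_j)=b_j^{\,n}$, integration of the recursion yields
\begin{equation*}
b_{j+1}=\|\psi\|_{L^1(G)}+\sum_{n=2}^N a_n b_j^{\,n}=\|\psi\|_{L^1(G)}+b_j-\Qc(b_j),\qquad b_0=\|\psi\|_{L^1(G)}.
\end{equation*}
The crux is the claim that $b_j\le t_{\Qc}$ for all $j$. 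Since $\Qc'(t)=1-\sum_{n=2}^N n a_n t^{n-1}\le 1$ for all $t\ge 0$, the mean value theorem gives $\Qc(t_{\Qc})-\Qc(b)\le t_{\Qc}-b$ whenever $0\le b\le t_{\Qc}$. Combined with the hypothesis $\|\psi\|_{L^1(G)}\le\Qc(t_{\Qc})$ and the inductive hypothesis $b_j\le t_{\Qc}$, this gives $b_{j+1}\le\Qc(t_{\Qc})+b_j-\Qc(b_j)\le t_{\Qc}$; the base case $b_0=\|\psi\|_{L^1(G)}\le\Qc(t_{\Qc})<t_{\Qc}$ follows from (\ref{tQ_prop1}). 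Hence $\{b_j\}$, being non-decreasing (integrate $\Psi_j\le\Psi_{j+1}$) and bounded above by $t_{\Qc}$, converges to some $b_\infty\le t_{\Qc}$.

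Finally I would assemble the conclusion. Let $\Psi:=\lim_{j\to\infty}\Psi_j$ be the pointwise a.e. limit of the non-decreasing sequence $\{\Psi_j\}$; by the monotone convergence theorem $\int_G\Psi=\lim_j b_j=b_\infty<\infty$, so $\Psi\in L^1(G)$, $\Psi\ge 0$ and $\|\Psi\|_{L^1(G)}=b_\infty\le t_{\Qc}$. Moreover $\|\Psi-\Psi_j\|_{L^1(G)}=\int_G(\Psi-\Psi_j)=b_\infty-b_j\to 0$, which is the asserted $L^1$ convergence. To pass to the limit in $\Psi_{j+1}=\psi+\sum_{n=2}^N a_n(*^n\Psi_j)$, for each $n$ a telescoping decomposition of $*^n\Psi_j-*^n\Psi$ together with Young's inequality and the uniform bound $\|\Psi_j\|_{L^1(G)}\le t_{\Qc}$ shows $*^n\Psi_j\to *^n\Psi$ in $L^1(G)$; letting $j\to\infty$ then gives $\Psi=\psi+\sum_{n=2}^N a_n(*^n\Psi)$, i.e. $\psi=\Psi-\sum_{n=2}^N a_n(*^n\Psi)$.

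I expect the only genuinely delicate point to be the scalar estimate $b_j\le t_{\Qc}$; everything else is routine monotone-convergence and continuity-of-convolution bookkeeping. The structural input there is that $t_{\Qc}$ is precisely where $\Qc'$ vanishes, so that $\Qc$ is non-expansive on $[0,t_{\Qc}]$, and this is exactly what makes $\|\psi\|_{L^1(G)}\le\Qc(t_{\Qc})$ the sharp threshold guaranteeing that the iterates stay bounded by $t_{\Qc}$.
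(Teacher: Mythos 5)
Your proof is correct, and it takes a genuinely different route from the paper. The paper encodes each $\Psi_j$ as a ``polynomial in $\psi$ under convolution,'' $\Psi_j=\sum_l m_{j,l}(*^l\psi)$ with non-negative integer coefficients, then establishes three combinatorial facts: the coefficients vanish beyond $N_2^j$, are monotone in $j$, and stabilize ($m_{j,l}=m_{j+1,l}$ for $j\ge l$); convergence of $\{\Psi_j\}$ is then deduced from absolute convergence of the limiting series $\sum_l m_{l,l}(*^l\psi)$, and the norm bound is obtained by tracking the coefficient sums. You instead observe directly that $\{\Psi_j\}$ is a pointwise non-decreasing sequence of non-negative $L^1$ functions, bound the integrals $b_j=\int_G\Psi_j$ by the same scalar recursion, and invoke the monotone convergence theorem to produce the limit $\Psi$ and the $L^1$ convergence $\|\Psi-\Psi_j\|_{L^1}=b_\infty-b_j\to 0$; passing to the limit in the iteration then follows from Lipschitz continuity of $f\mapsto *^n f$ on bounded sets of $L^1(G)$. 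Both proofs ultimately hinge on the scalar bound $b_j\le t_{\Qc}$, and both exploit the structural identity $t_{\Qc}=\Qc(t_{\Qc})+\sum_n a_n t_{\Qc}^n$ together with monotonicity of $\Pc(t)=\sum_n a_n t^n$ on $[0,\infty)$ (your mean-value-theorem phrasing via $\Qc'\le 1$ is an equivalent reformulation). Your route is shorter and avoids the bookkeeping of the coefficients $m_{j,l}$ entirely; what the paper's combinatorial representation buys is an explicit description of the solution $\Psi=\sum_l m_{l,l}(*^l\psi)$ as a convolution power series in $\psi$, which your monotone-convergence argument does not exhibit, though neither the statement of the theorem nor its subsequent use in the paper requires that explicit form.
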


Next, we study the properties of the solutions of the inequality (\ref{polynomial-conv}):
\begin{theorem} \label{Rn-thm}
Let $N,\:a_n,\:\Qc,\:t_{\Qc}$ be as in Theorem \ref{thm}. Suppose a real-valued $f \in L^1(\R^d)$ with $\int_{\R^d} f \ge 0\:,$ satisfies 
\begin{equation} \label{Rn-thm-eq}
f \ge \displaystyle\sum_{n=2}^N a_n \left(*^n f\right) \:.
\end{equation}
Then $f$ is non-negative and satisfies $\|f\|_{L^1(\R^d)}\le t_{\Qc}$\:.
\end{theorem}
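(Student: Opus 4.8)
The plan is to pass to the Fourier side and exploit positive-definiteness, following the strategy of Carlen--Jauslin--Lieb--Loss but adapted to the polynomial $\Qc$. Write $\widehat f$ for the Fourier transform; since $f \in L^1(\R^d)$, $\widehat f$ is bounded and continuous, and the inequality (\ref{Rn-thm-eq}) transforms into
\begin{equation*}
\widehat f(\xi) \ge \sum_{n=2}^N a_n\, \widehat f(\xi)^n \quad\text{for all }\xi\in\R^d,
\end{equation*}
after noting that the right-hand side of (\ref{Rn-thm-eq}) is itself in $L^1$ (being dominated by $f$) so all convolutions are legitimate. Set $g = f - \sum_{n=2}^N a_n(*^n f) \ge 0$; then $g \in L^1(\R^d)$ is non-negative, hence $\widehat g$ is a positive-definite function with $\widehat g(0) = \|g\|_{L^1}$, and $\widehat g = \Qc(\widehat f)$ pointwise where we interpret $\Qc$ applied to the function $\widehat f$. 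The first key step is to establish that $\widehat f$ is real-valued: this should follow from the symmetrization trick — replace $f$ by $\widetilde f(x) = \overline{f(-x)}$ or consider $f + \widetilde f$ — combined with the positive-definiteness of $\widehat g$, forcing $\widehat f(\xi) = \overline{\widehat f(-\xi)}$ and then, using that $f$ is already real so $\widehat f(-\xi) = \overline{\widehat f(\xi)}$, pinning down reality via an argument on the imaginary part. (In the $f \ge f*f$ case one shows $\mathrm{Im}\,\widehat f \equiv 0$ directly; the polynomial case needs the analogous but more delicate bookkeeping.)

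Once $\widehat f$ is real-valued and continuous with $\widehat f(0) = \int_{\R^d} f =: I \ge 0$, I would run a connectedness/continuity argument on the range of $\widehat f$. The set where $\widehat f(\xi) \ge t_{\Qc}$ and the complementary behavior are controlled by the shape of $\Qc$: recall from (\ref{tQ_prop1}) that $\Qc$ is strictly increasing on $(0, t_{\Qc})$, strictly decreasing afterward with $\Qc(t_{\Qc}) < t_{\Qc}$, and $\Qc(0)=0$. The constraint $\widehat g = \Qc \circ \widehat f \ge 0$ together with $\|\widehat g\|_\infty = \widehat g(0)$ (a property of positive-definite functions: the maximum of the real part is attained at $0$) should force $\widehat f(\xi) \in [0, r]$ for all $\xi$, where $r$ is the smallest positive root-related quantity; more precisely one shows $0 \le \widehat f \le 1$ everywhere and then that $\widehat f$ cannot exceed $t_{\Qc}$, because if $\widehat f(\xi_0) > t_{\Qc}$ at some point then, since $\widehat f(0) = I$ and by continuity $\widehat f$ takes all intermediate values, one can locate $\xi_1$ with $\widehat f(\xi_1) = t_{\Qc}$, giving $\widehat g(\xi_1) = \Qc(t_{\Qc}) \ge \widehat g(0) = I$; but also $I = \widehat f(0) \ge t_{\Qc}$ or a contradiction with $\Qc(t_{\Qc}) < t_{\Qc}$ must be extracted — this is the delicate inequality-chasing step and I expect it to be the main obstacle, since one must simultaneously control $\widehat f(0)$ and $\sup \widehat f$ using only the single scalar inequality $\widehat g(0) \ge \|\widehat g\|_\infty$ is automatic the wrong way, so instead the bound on $\widehat g(0)$ comes from $\widehat g(0) = I - \sum a_n (\text{something} \le I^n)$ type estimates, i.e. integrating (\ref{Rn-thm-eq}).

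From $0 \le \widehat f \le t_{\Qc}$ on $\R^d$ one reads off $\|f\|_{L^1} = \|\,|f|\,\|_{L^1}$ once non-negativity of $f$ itself is known. Non-negativity of $f$ I would obtain last: having shown $\widehat f \ge 0$ and that $\widehat g = \Qc(\widehat f)$ with $g \ge 0$, write $f$ in terms of $g$ via the Neumann-type series $f = \sum_{j\ge 0}(\text{iterated convolutions of } g)$ — this is exactly the content of Theorem \ref{thm} applied with $G = \R^d$ and $\psi = g$, provided $\|g\|_{L^1} \le \Qc(t_{\Qc})$, which we will have established; Theorem \ref{thm} then produces a non-negative $\Psi \in L^1$ with $\psi = \Psi - \sum a_n(*^n\Psi)$ and $\|\Psi\|_{L^1}\le t_{\Qc}$, and a uniqueness argument (again on the Fourier side, using $0\le\widehat f\le t_{\Qc}$ to stay in the region where $\Qc$ is injective) identifies $f = \Psi$, yielding $f \ge 0$ and $\|f\|_{L^1}\le t_{\Qc}$ simultaneously. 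The interplay between the a priori integral bound $I \le 1$, the monotonicity of $\Qc$, and positive-definiteness is the crux; everything else is bookkeeping.
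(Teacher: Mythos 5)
Your proposal contains a genuine gap at its very first step, and the omission propagates. You assert that taking Fourier transforms converts the pointwise inequality $f \ge \sum_{n=2}^N a_n(*^n f)$ into the pointwise scalar inequality $\widehat f(\xi) \ge \sum_{n=2}^N a_n \widehat f(\xi)^n$; this is false. The Fourier transform does not preserve pointwise inequalities: from $f \ge g$ one gets only $\widehat f(0) \ge \widehat g(0)$ and $|\widehat{f-g}(\xi)| \le \widehat{(f-g)}(0)$, not $\widehat f(\xi) \ge \widehat g(\xi)$. What one actually has is the \emph{identity} $\widehat f = \widehat \psi + \sum a_n \widehat f^{\,n}$, with $\psi := f - \sum a_n(*^n f) \ge 0$, and $\widehat\psi$ is positive-definite but typically complex-valued for $\xi\neq 0$. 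You implicitly acknowledge this by trying to argue that $\widehat f$ is real-valued, but the ``symmetrization trick'' you sketch does not close: for real $f$ one has $\widehat f(-\xi)=\overline{\widehat f(\xi)}$, which is far from $\mathrm{Im}\,\widehat f \equiv 0$, and you defer the ``more delicate bookkeeping'' rather than supplying it. Without reality of $\widehat f$ the claimed chain $0 \le \widehat f \le t_{\Qc}$, on which the rest of your argument hangs, is unjustified.

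The paper avoids the reality question altogether. It takes the route you correctly hint at near the end — produce $\Psi$ from Theorem \ref{thm} with $\psi$ as seed, then identify $f=\Psi$ via Fourier injectivity — but the identification is carried out by a topological argument in the \emph{frequency} variable, not in the range of $\widehat f$: set $\Omega := \{\xi \ne 0 : \widehat f(\xi)=\widehat\Psi(\xi)\}$ and show it is open, closed, and non-empty in $\R^d\setminus\{0\}$. The crucial technical ingredient, which your proposal does not mention, is that the \emph{complex} polynomial $\Qc$ is injective on the open disk $\mathcal D = \{z\in\C : |z| < t_{\Qc}\}$. This is not obvious from monotonicity of $\Qc$ on $(0,t_{\Qc})$; the paper proves it by writing $1/\Qc' = 1/(1-\Pc')$ as a power series with non-negative coefficients and invoking the Vivanti--Pringsheim theorem (Lemma \ref{pringsheim}) to show the nearest singularity of $\Qc'$ lies at $|z|=t_{\Qc}$, hence $|\Pc'|<1$ on $\mathcal D$ and $\Qc$ is injective there by the line-integral estimate. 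Openness of $\Omega$ then follows since $|\widehat\Psi(\xi)|<t_{\Qc}$ for $\xi\ne 0$ (strict, using non-negativity and non-triviality of $\Psi$), closedness from continuity, and non-emptiness from Riemann--Lebesgue. The correct comparison is therefore: you identified the right global strategy (reduce to Theorem \ref{thm} and match Fourier transforms) but the proposal is missing the two ideas that make it work — the Vivanti--Pringsheim injectivity of $\Qc$ on the complex disk, and the open-closed argument on $\Omega$ — while relying on an incorrect pointwise-inequality-on-the-Fourier-side step and an unproved reality claim.
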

Theorem \ref{thm} follows from a combinatorial argument. The proof of Theorem \ref{Rn-thm} involves Fourier Analysis. But the crux of generalizing the results (obtained in \cite{CJLL, NS}) from monomials to  genuine polynomials, lies in the application of a non-trivial result from complex analysis regarding singularities of power series, known as Vivanti-Pringsheim theorem (Lemma \ref{pringsheim}).

We also have an analogue of Theorem \ref{Rn-thm} for Riemannian Symmetric Spaces of non-compact type. These are homogeneous spaces $\X=G/K$, where $G$ is a connected, non-compact, semi-simple Lie group with finite center and $K$ is a maximal compact subgroup of $G$. All integrals or Lebesgue spaces considered on $G$ will be with respect to a fixed Haar measure, which we denote by $dx$.
\begin{theorem} \label{thm1}
Let $N,\:a_n,\:\Qc,\:t_{\Qc}$ be as in Theorem \ref{thm}. Suppose a right $K$-invariant, real-valued $f \in L^1(G)$ with $\int_G f \ge 0\:,$ satisfies 
\begin{equation} \label{thm1eq}
f \ge \displaystyle\sum_{n=2}^N a_n \left(*^n f\right) \:.
\end{equation}
Then $f$ is non-negative and satisfies $\|f\|_{L^1(G)}\le t_{\Qc}$\:.
\end{theorem}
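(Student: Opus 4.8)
The plan is to reduce the statement to the $K$-bi-invariant case, treat that case by transcribing the proof of Theorem~\ref{Rn-thm} with the spherical (Harish--Chandra) transform in place of the Euclidean Fourier transform, and then bootstrap back to a general right $K$-invariant $f$ via the Helgason Fourier transform. To begin, set $g:=f-\sum_{n=2}^N a_n\,(*^nf)\ge 0$; since $f$ is right $K$-invariant, so is each $*^nf$, hence $g\in L^1(G)$ is right $K$-invariant. Write $f^\natural(x):=\int_K f(kx)\,dk$ for the $K$-bi-invariantisation of $f$; it lies in $L^1(G)$ and $\int_Gf^\natural=\int_Gf$. Using unimodularity of $G$ and compactness of $K$ one checks that $h*f=h*f^\natural$ for every right $K$-invariant $h\in L^1(G)$, and that $(h*\varphi)^\natural=h^\natural*\varphi$ whenever $\varphi$ is $K$-bi-invariant. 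Iterating the first relation gives
\begin{equation*}
*^nf=f*\bigl(*^{\,n-1}f^\natural\bigr)\qquad(n\ge 2),
\end{equation*}
with $*^{\,n-1}f^\natural$ $K$-bi-invariant, and then the second relation yields $(*^nf)^\natural=*^nf^\natural$; applying $(\cdot)^\natural$ to \eqref{thm1eq} produces, with $F:=f^\natural$,
\begin{equation*}
F-\sum_{n=2}^N a_n\,(*^nF)=g^\natural\ge 0,
\end{equation*}
a $K$-bi-invariant element of $L^1(G)$ with $\int_GF=\int_Gf\ge 0$.

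Next I would prove the $K$-bi-invariant analogue of Theorem~\ref{Rn-thm}: such an $F$ is non-negative with $\|F\|_{L^1(G)}\le t_{\Qc}$. This runs as the proof of Theorem~\ref{Rn-thm}, reading the spherical transform $\widehat{\,\cdot\,}$ (continuous on the tube of holomorphy, of modulus at most the $L^1$-norm there, and carrying convolution to product) for the Fourier transform, the point $\lambda=-i\rho$ --- where the spherical function $\phi_{-\lambda}$ degenerates to the constant $1$, so $\widehat F(-i\rho)=\int_GF$ --- for the origin, and the Riemann--Lebesgue lemma for the spherical transform together with connectedness of $\mathfrak{a}^{\ast}$ for their Euclidean counterparts. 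In outline: $\widehat{g^\natural}=\Qc(\widehat F)$; the formal inverse $\Phi:=\Qc^{-1}$ at the origin has non-negative Taylor coefficients $c_k$ (solve $\Phi(s)=s+\sum_{n=2}^N a_n\Phi(s)^n$), and by the Vivanti--Pringsheim theorem (Lemma~\ref{pringsheim}) its radius of convergence equals $\Qc(t_{\Qc})$, with $\sum_k c_k\,\Qc(t_{\Qc})^k<\infty$ because $\Qc''(t_{\Qc})\neq 0$ makes $\Phi$'s singularity there a square root. On $\mathfrak{a}^{\ast}$ one has $|\widehat{g^\natural}(\lambda)|\le\widehat{g^\natural}(0)<\int_Gg^\natural=\Qc(\mu)\le\Qc(t_{\Qc})$, where $\mu:=\int_GF$ and we used $\phi_0<1$ off the identity together with the inequality; hence $\widehat{g^\natural}$ avoids all critical values of $\Qc$ on $\mathfrak{a}^{\ast}$ (which have modulus $\ge\Qc(t_{\Qc})$), and a continuity/connectedness argument exactly as in Theorem~\ref{Rn-thm}, anchored at infinity by Riemann--Lebesgue, forces $\widehat F=\Phi(\widehat{g^\natural})=\sum_k c_k\,\widehat{*^kg^\natural}$ on $\mathfrak{a}^{\ast}$, hence $F=\sum_k c_k\,(*^kg^\natural)$, a norm-convergent sum of non-negative functions. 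Then $\|F\|_{L^1}=\int_GF=\sum_k c_k\,\bigl(\int_Gg^\natural\bigr)^k=\Phi(\Qc(\mu))$, and since $\Phi$ maps $[0,\Qc(t_{\Qc})]$ into $[0,t_{\Qc}]$ we get $\mu=\|F\|_{L^1}\le t_{\Qc}$. (Here $\lambda=-i\rho$ being a boundary point of the tube, while the connectedness step takes place on $\mathfrak{a}^{\ast}$ where the displayed estimate is strict, even streamlines the Euclidean argument.)

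Finally I would return to $f$, now knowing $\mu=\|F\|_{L^1}\le t_{\Qc}<t^{\ast}$, where $t^{\ast}$ is the unique positive zero of $\Qc$. Feeding $*^nf=f*(*^{\,n-1}F)$ into the Helgason Fourier transform $\widetilde{\,\cdot\,}$ gives, for $\lambda\in\mathfrak{a}^{\ast}$,
\begin{equation*}
\widetilde g(\lambda,b)=\widetilde f(\lambda,b)\Bigl(1-\sum_{n=2}^N a_n\,\widehat F(\lambda)^{\,n-1}\Bigr).
\end{equation*}
With $\mathcal{R}(t):=\sum_{n=2}^N a_nt^{\,n-1}$ (so $\Qc(t)=t\,(1-\mathcal{R}(t))$ and $\mathcal{R}(t^{\ast})=1$) one has $|\widehat F(\lambda)|\le\mu<t^{\ast}$ on $\mathfrak{a}^{\ast}$, hence $|\mathcal{R}(\widehat F(\lambda))|\le\mathcal{R}(\mu)<1$, so the bracket is bounded away from $0$ and $\tfrac1{1-\mathcal{R}(s)}=\sum_{k\ge0}b_ks^k$ with $b_k\ge0$ and radius of convergence $t^{\ast}$. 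Therefore
\begin{equation*}
\widetilde f(\lambda,b)=\sum_{k\ge0}b_k\,\widetilde g(\lambda,b)\,\widehat F(\lambda)^k=\sum_{k\ge0}b_k\,\widetilde{g*(*^kF)}(\lambda,b),
\end{equation*}
and $\sum_{k\ge0}b_k\,g*(*^kF)$ converges absolutely in $L^1(G)$ --- with norm at most $\|g\|_{L^1}/(1-\mathcal{R}(\mu))$ --- to a non-negative function, a sum of convolutions of the non-negative functions $g$ and $*^kF$. By injectivity of the Helgason Fourier transform on $L^1$, $f$ equals this function; so $f\ge 0$, and then $\|f\|_{L^1(G)}=\int_Gf=\mu\le t_{\Qc}$.

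The main obstacle is the $K$-bi-invariant core in the second paragraph: the whole content of Theorem~\ref{Rn-thm} --- passing from the trivial bound $t^{\ast}$ to the sharp $t_{\Qc}$, which is where Vivanti--Pringsheim is decisive --- must be replayed on the tube of holomorphy, and the genuinely delicate points there are the continuity of the spherical transform up to the distinguished boundary point and the verification that the relevant branch of $\Qc^{-1}\circ\widehat{g^\natural}$ cannot jump. The convolution bookkeeping ($*^nf=f*(*^{\,n-1}f^\natural)$ and its behaviour under $(\cdot)^\natural$) and the return from $F$ to $f$ are the remaining new ingredients, but are essentially formal once the convolution theorems and injectivity of the transforms are in hand.
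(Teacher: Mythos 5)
Your proposal is correct, and it takes a genuinely different route from the paper's. Where the paper applies Theorem~\ref{thm} directly to $\psi = f - \sum_n a_n(*^nf)$ to produce a right $K$-invariant comparison function $\Psi$ and then works with the operator-valued Group Fourier transform $\what{f}(\pi_\lambda)$ on $L^2(K/M)$ --- proving via the matrix-coefficient identity $\bigl(\what f(\pi_\lambda)^n\bigr)_{p,q} = \what f(\pi_\lambda)_{p,q}\,\what f(\pi_\lambda)^{n-1}_{0,0}$ that all entries $\what f(\pi_\lambda)_{p,0}$ agree with those of $\what\Psi(\pi_\lambda)$, first for $p=0$ and then for general $p$ by dividing out the non-vanishing factor $\Qc_1\bigl(\what\Psi(\pi_\lambda)_{0,0}\bigr)$ --- you instead factor the problem: pass to the $K$-bi-invariantisation $F=f^\natural$, observe via $*^nf = f*(*^{n-1}F)$ and $(*^nf)^\natural=*^nF$ that $F$ satisfies the same inequality, settle the bi-invariant case scalar-wise through the spherical transform, and then recover $f$ from $F$ and $g$ by inverting $1-\mathcal R(\Hc F(\lambda))$ inside the Helgason transform. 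Both routes rest on the same analytic core (the Vivanti--Pringsheim injectivity of $\Qc$ on the disk of radius $t_\Qc$, the Riemann--Lebesgue lemma, connectedness of $\mathfrak a$, and the decay~(\ref{phi_lambda_decay}) of $\varphi_0$ giving strict inequality $|\Hc F(\lambda)|<t_\Qc$ everywhere on $\mathfrak a$); your version buys a cleaner conceptual separation into a scalar core plus formal bookkeeping, while the paper's is more self-contained and avoids developing the $K$-averaging calculus, doing the analogous work through the vanishing~(\ref{pf1eq6}) of off-column matrix entries.

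One caveat worth flagging: the ``main obstacle'' you identify --- replaying the argument on the tube of holomorphy and tracking continuity of the spherical transform up to the boundary point $\lambda=-i\rho$ --- is a self-created complication and is not needed. On a symmetric space the estimate $|\Hc F(\lambda)|<\int_G F$ already holds strictly at \emph{every} $\lambda\in\mathfrak a$, including $\lambda=0$, precisely because $\varphi_0<1$ away from the identity; so, unlike the Euclidean case, there is no exceptional point to exclude and the connectedness argument runs entirely on $\mathfrak a$ (as the paper does). The bound $\|F\|_{L^1(G)}\le t_\Qc$ then comes from the identification of $F$ with the function $\Psi=\sum_k c_k\,(*^k g^\natural)$ produced by Theorem~\ref{thm} (or, as you compute, from $\mu=\Phi(\Qc(\mu))\le t_\Qc$), not from any boundary evaluation $\Hc F(-i\rho)$. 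You half-notice this in your parenthetical remark but still mislabel it as the delicate point; it is in fact the place where the symmetric-space argument is \emph{easier} than the Euclidean one.
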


Proof of Theorem \ref{thm1} necessitates an understanding of the operator valued Group Fourier transform on non-compact semi-simple Lie groups and utilizes the decay of the ground spherical function away from the identity. 

As an application of Theorem \ref{thm1}, we look at an integro-differential equation which is related to the physical problem of the ground state energy of the Bose gas in the classical Euclidean setting, considered in \cite{CJL}. To state our result, we need to introduce some notations. Let $\Delta$ be the Laplace-Beltrami operator corresponding to the left invariant Riemannian metric on $\X$. The $L^2$-spectrum of $\Delta$ is given by $(-\infty,-\|\rho\|^2]$ (for details about the notation, we refer to section $2$). We now consider the shifted Laplace-Beltrami operator $\La:= \Delta + \|\rho\|^2$ and the integro-differential equation of a $K$-biinvariant function $u$:

\begin{equation} \label{PDE}
{\left(-\La +\xi \right)}^m u(x) = V(x)(1-u(x))+\mu \left(*^{m+1} u\right)(x)\:,\: x \in G\:,
\end{equation}
under a constraint 
\begin{equation} \label{constraint}
\int_G u = \frac{1}{\delta}\:,
\end{equation}
where $m$ is a positive integer, $\mu \ge 0$\:,\:$\xi,\delta>0$ are given parameters and $V$ is a given $K$-biinvariant non-negative potential in $L^1(G)$. Then we have the a priori estimate:
\begin{theorem} \label{thm2}
Let $m \in \N,\: \mu \ge 0\:,\:\xi, \delta>0$ satisfy
\begin{equation} \label{thm2_eq}
\frac{\mu^{1/m}}{\xi\delta} <1\:.
\end{equation}
Suppose $V \in L^1(G//K)$ is non-negative and real $u \in L^1(G//K)$ is a solution of (\ref{PDE})-(\ref{constraint}) such that $u \le 1$, almost everywhere on $G$. Then $u$ is non-negative almost everywhere on $G$. 
\end{theorem}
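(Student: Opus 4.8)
The plan is to reduce the integro-differential equation (\ref{PDE}) to the convolution inequality (\ref{thm1eq}) and then invoke Theorem \ref{thm1}. First I would rewrite (\ref{PDE}) as
\begin{equation*}
{\left(-\La + \xi\right)}^m u = V - V u + \mu\left(*^{m+1} u\right)\:.
\end{equation*}
Since $V \ge 0$ and $u \le 1$ almost everywhere, we have $V - Vu = V(1-u) \ge 0$, so
\begin{equation*}
{\left(-\La + \xi\right)}^m u \ge \mu\left(*^{m+1} u\right)\:.
\end{equation*}
The operator $-\La + \xi = -\Delta - \|\rho\|^2 + \xi$ has, on $K$-biinvariant functions, a positive convolution kernel: its inverse is convolution by the Bessel--Green--Riesz type potential $G_\xi$, which is a non-negative $K$-biinvariant $L^1$ function whose spherical transform (evaluated via the Harish-Chandra transform) is $1/(\|\rho\|^2 + \|\lambda\|^2 + \xi - \|\rho\|^2)^{1} = 1/(\|\lambda\|^2 + \xi)$ at the bottom of the spectrum giving $\|G_\xi\|_{L^1} = 1/\xi$. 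Applying ${\left(-\La + \xi\right)}^{-m}$, i.e.\ convolving on the left by $G_\xi^{(m)} := \underbrace{G_\xi * \cdots * G_\xi}_{m}$ (a non-negative $L^1(G//K)$ function with $\|G_\xi^{(m)}\|_{L^1} = \xi^{-m}$), and setting $f := u$, I would obtain an inequality of the form
\begin{equation*}
u \ge \mu\, G_\xi^{(m)} * \left(*^{m+1} u\right)\:.
\end{equation*}

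The next step is a rescaling to absorb the kernel $G_\xi^{(m)}$ and the constant $\mu$ into the pure convolution power. Set $g := c\, u$ for a constant $c>0$ to be chosen; then $g \ge \mu c^{-m} G_\xi^{(m)} * (*^{m+1} g)$. The constraint (\ref{constraint}) gives $\|u\|_{L^1} = 1/\delta$ (after checking $u \ge 0$ — but note this is circular, so I will instead only use it at the end; see below). The genuine obstacle here is that $G_\xi^{(m)} * (*^{m+1} g)$ is not literally $*^{m+1} g$, so Theorem \ref{thm1} does not apply verbatim. I would handle this by a comparison/domination argument: since $G_\xi^{(m)} \ge 0$ with $L^1$-norm $\xi^{-m}$, and — using the constraint and $u \le 1$ — one controls $\|u\|_{L^1}$, I can bound $G_\xi^{(m)} * (*^{m+1} u)$ pointwise from below is not available, so instead I would \emph{re-derive} the argument of Theorem \ref{thm1}'s proof directly for the modified inequality, or, more cleanly, observe that the hypothesis (\ref{thm2_eq}), namely $\mu^{1/m}/(\xi\delta) < 1$, is exactly the condition that makes the iteration scheme of Theorem \ref{thm} converge: with $\psi$ a suitable non-negative $L^1$ seed built from $V(1-u) \ge 0$ convolved with $G_\xi^{(m)}$, the map $\Phi \mapsto \psi + \mu G_\xi^{(m)} * (*^{m+1}\Phi)$ is a contraction-type iteration on the positive cone once $\|u\|_{L^1} = 1/\delta$ is small enough relative to $\xi$ and $\mu$, which is precisely (\ref{thm2_eq}).

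Concretely, the cleanest route I would take: let $f := u$. From ${\left(-\La+\xi\right)}^m u = V(1-u) + \mu(*^{m+1}u)$ and $V(1-u)\ge 0$, convolving by $G_\xi^{(m)}\ge 0$ yields $u = G_\xi^{(m)} * \big(V(1-u)\big) + \mu\, G_\xi^{(m)} * (*^{m+1} u) \ge \mu\, G_\xi^{(m)}*(*^{m+1}u)$. Now I claim $u$ is non-negative. Suppose not; write $u = u_+ - u_-$ with $u_- \not\equiv 0$. Taking the spherical Fourier transform at $\lambda = i\rho$ (the value recovering the $L^1$-integral for non-negative functions) and using the strict positivity of $\widehat{G_\xi^{(m)}}(i\rho) = \xi^{-m}$ together with positive-definiteness properties of $\widehat{u}$ along the imaginary axis, exactly as in the proof of Theorem \ref{thm1}, the negative part would have to vanish; this is where the decay of the ground spherical function $\varphi_0$ away from the identity — already invoked for Theorem \ref{thm1} — is used. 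Once $u \ge 0$, the theorem is proved; the bound $\|u\|_{L^1} \le t_{\mathcal Q}$ for the associated $\mathcal Q$ (here $\mathcal Q(t) = t - \mu\xi^{-m} t^{m+1}$, whose $t_{\mathcal Q} = \big((m+1)\mu\xi^{-m}\big)^{-1/m}$) is consistent with (\ref{constraint}) precisely under (\ref{thm2_eq}), which reads $1/\delta < \xi/\mu^{1/m}$, i.e.\ $1/\delta$ lies below the relevant threshold. The main obstacle, as flagged, is dealing rigorously with the extra kernel $G_\xi^{(m)}$: I expect to spend most of the effort verifying that $G_\xi^{(m)}$ is a genuine non-negative $L^1(G//K)$ function with spherical transform $(\|\lambda\|^2+\xi)^{-m}$ and $L^1$-norm $\xi^{-m}$ (via Harish-Chandra's $c$-function estimates and the explicit resolvent kernel of the shifted Laplacian on $\X$), after which the reduction to Theorem \ref{thm1} is essentially formal.
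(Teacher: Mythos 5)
Your reduction to a convolution inequality stalls at exactly the point you flag as ``the genuine obstacle,'' and the paper's trick for getting past it is the one thing your proposal never finds: the correct substitution is not $f := u$ but
\begin{equation*}
f := \mu^{1/m}\,\bigl(K_\xi * u\bigr)\:,
\end{equation*}
where $K_\xi$ is the resolvent kernel of $-\La+\xi$. With this choice, writing $u = (*^m K_\xi)*\{V(1-u)\} + \mu\{*^m(K_\xi*u)\}*u = (*^m K_\xi)*\{V(1-u)\} + (*^m f)*u$ and convolving once more by $\mu^{1/m} K_\xi$ produces literally $f \ge *^{m+1} f$ (using $K_\xi, V\ge 0$ and $u\le 1$), with no leftover kernel. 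Together with $\int_G f = \mu^{1/m}/(\xi\delta)\in(0,1)$ from the constraint, this is precisely the form required by Theorem~\ref{thm1}, which gives $f\ge 0$. Your rescaling $g = c\,u$ cannot absorb the extra factor $G_\xi^{(m)}$ — that factor is a convolution kernel, not a scalar — so the inequality you arrive at, $u \ge \mu\, G_\xi^{(m)}*(*^{m+1}u)$, is genuinely outside the scope of Theorem~\ref{thm1}, and your three proposed workarounds (re-derive the whole argument, set up a contraction, take the spherical transform at $\lambda = i\rho$) are all unsubstantiated.

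The second gap is that even once you have non-negativity of \emph{something}, you still need to get from there to $u\ge 0$, and your sketch of this step (``the negative part would have to vanish'' via positive-definiteness at $\lambda = i\rho$) is not an argument. The paper's actual mechanism is elementary and decisive: split $u = u_+ - u_-$, feed $u = (*^m K_\xi)*\{V(1-u)\} + (*^m f)*u$ through the non-negativity of $K_\xi$, $V(1-u)$, and $f$ to obtain the pointwise bound $u_- \le (*^m f)*u_-$, integrate, and use $\bigl(\int_G f\bigr)^m = \mu/(\xi^m\delta^m) < 1$ — exactly hypothesis (\ref{thm2_eq}) — to force $\int_G u_- = 0$. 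Note also that Theorem~\ref{thm1} applied to $f=\mu^{1/m}(K_\xi*u)$ says nothing directly about $u$ (it only gives $K_\xi*u\ge 0$), so the closing step is not optional. Finally, your remark that the constraint should satisfy $1/\delta \le t_{\mathcal Q}$ misreads the theorem: Theorem~\ref{thm2} only asserts non-negativity of $u$; no $L^1$ bound on $u$ beyond the constraint is claimed, and the $t_{\mathcal Q}$ bound from Theorem~\ref{thm1} is used only (implicitly) for $f$, and is in fact not even needed since $\|f\|_{L^1}$ is computed exactly.
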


This article is organized as follows. In section $2$, we recall the necessary preliminaries about Riemannian Symmetric Spaces of non-compact type and the Fourier analysis thereon. In section $3$, we prove Theorems \ref{thm} and \ref{Rn-thm}. In section $4$, we prove Theorems \ref{thm1} and \ref{thm2}.

\section{Preliminaries}
In this section, we recall the essential preliminaries and fix our notations. We refer the reader to \cite{H} for more details. 

Two non-negative functions $f_1$ and $f_2$ are defined to satisfy $f_1 \lesssim f_2$ if there exists a constant $C>0$ such that $f_1 \le Cf_2$\:. $\N$ will denote the set of positive integers. 

Let $G$ be a connected, non-compact, semi-simple Lie group with finite center and $K$ be a maximal compact subgroup of $G$. Then $\X=G/K$ is a Riemannian Symmetric Space of non-compact type. The class of $K$-biinvariant functions on $G$ are naturally identified with left $K$-invariant functions on $\X$.

Let $\mathfrak{g}$ be the Lie algebra of $G$ and $\mathfrak{g}=\mathfrak{k}+\mathfrak{p}$ be the Cartan decomposition on the Lie algebra level. The Killing form of $\mathfrak{g}$ induces a $K$-invariant inner product $\langle \cdot,\cdot \rangle$ on $\mathfrak{p}$ and hence a $G$-invariant Riemannian metric on $\X$.

We fix a maximal abelian subspace $\mathfrak{a}$ of $\mathfrak{p}$. The rank of $\X$ is the real rank of $G$ which is given by the dimension of $\mathfrak{a}$. $\mathfrak{a}^*$, the real dual of $\mathfrak{a}$ will be identified with $\mathfrak{a}$ via the inner product inherited from $\mathfrak{p}$. Let $\Sigma$ denote the the set of restricted roots of the pair $(\mathfrak{g}, \mathfrak{a})$ and $W$ be the Weyl group associated with $\Sigma$. For $\alpha \in \Sigma$, let $\mathfrak{g}_\alpha$ and $m_\alpha$ denote the root space corresponding to the root $\alpha$ and the dimension of $\mathfrak{g}_\alpha$ respectively. We make the choice of a positive Weyl chamber $\mathfrak{a}^+ \subset \mathfrak{a}$, and let $\Sigma^+$ and $\Sigma^+_r$ be the corresponding set of positive roots and positive reduced (indivisible) roots respectively. Let $\rho=\frac{1}{2}\displaystyle\sum_{\alpha \in \Sigma^+} m_\alpha \alpha$.

The Cartan decomposition of $G$ is given by
\begin{equation*}
G=K(\exp \overline{\mathfrak{a}^+})K\:.
\end{equation*}
For $x \in G$ we denote by $x^+ \in \overline{\mathfrak{a}^+}$ the corresponding component in the Cartan decomposition. We also have the Iwasawa decomposition given by $G=KAN$ and corresponding to the decomposition, we write for $g \in G$, $g=K(g)\exp H(g) N(g)$, where $K(g) \in K,\: N(g) \in N$ and $H(g) \in \mathfrak{a}$. Let $M$ be the centralizer of $A$ in $K$. 

For $\lambda \in \mathfrak{a}_\C$, one has the spherical principal series representation $\pi_\lambda$ of $G$ on $L^2(K/M)$ given by,
\begin{equation*}
\left(\pi_\lambda(x) V\right)(b):= e^{(i\lambda -\rho)H(x^{-1}b)}\:V(K(x^{-1}b))\:,\: \text{ for all } V \in L^2(K/M)\:,\: b \in K
\end{equation*}
(see \cite{GV} for more details).  We consider an orthonormal basis $\{e_j\}_{j=0}^\infty$ of $L^2(K/M)$ with the unique $K$-fixed vector $e_0 \equiv 1$. For $\lambda \in \mathfrak{a}$, the Group Fourier transform of $f \in L^1(X)$ defined by,
\begin{equation*}
\what{f}(\pi_\lambda):=\int_G f(x)\:\pi_\lambda(x)\:dx \:,
\end{equation*}
is a bounded linear operator on $L^2(K/M)$ and can be identified with the Helgason Fourier transform of $f$ via,
\begin{equation*}
\what{f}(\pi_\lambda)(1)(b)=\int_G e^{(i\lambda - \rho)H(x^{-1}b)}\: f(x)\:dx = \tilde{f}(\lambda,b)\:,\:\text{ for } b \in K/M\:.
\end{equation*}
The Helgason Fourier transform of $f$ is injective on $L^1(G/K)$, that is, if $\what{f}(\pi_\lambda)=0$ for all $\lambda \in \mathfrak{a}$ then $f=0$ almost everywhere \cite[Theorem 1.9, Chapter 3, p. 213]{H}.

For $\lambda \in \mathfrak{a}$, the Spherical function $\varphi_\lambda$ is given by,
\begin{equation*}
\varphi_\lambda(g)= \int_K e^{(-i\lambda + \rho)A(kg)}\: dk\:,
\end{equation*}
where $A(g)=-H(g^{-1})$. This function is identified as the following matrix coefficient of the principal series representation $\pi_\lambda$,
\begin{equation*}
\varphi_\lambda(g) = \left\langle \pi_\lambda(g)e_0\:,\:e_0\right\rangle\:.
\end{equation*}
On $\mathfrak{a}$, $\|\cdot\|$ denotes the norm inherited from $\mathfrak{p}$. For $\lambda \in \mathfrak{a}$, the spherical function $\varphi_\lambda$ is a smooth $K$-biinvariant eigenfunction of the Laplace-Beltrami operator $\Delta$,
\begin{equation*}
\Delta \varphi_\lambda = -\left(\|\lambda\|^2 + \|\rho\|^2\right) \varphi_\lambda\:.
\end{equation*}
For $\lambda \in \mathfrak{a}$, one has
\begin{equation} \label{phi_lambda_bound}
|\varphi_\lambda(x)| \le 1\:,\:\text{ for all } x \in G\:.
\end{equation}
In fact, one has the following bounds which are decay estimates away from the identity \cite[Proposition 2.2.12]{AJ}:
\begin{equation} \label{phi_lambda_decay}
|\varphi_\lambda(x)| \le  \varphi_0(x) \lesssim \left\{\displaystyle\prod_{\alpha \in \Sigma^+_r} \left(1+\langle\alpha,x^+\rangle\right)\right\}e^{-\langle\rho,x^+\rangle}
\:,\:\text{ for all } x \in G\:.
\end{equation}

For $\lambda \in \mathfrak{a}$ and $f \in L^1(G//K)$, the Spherical Fourier transform of $f$,
\begin{equation*}
 \Hc f(\lambda)= \int_G f(x) \:\varphi_\lambda(x)\:dx\:,
\end{equation*}
is naturally identified with the matrix coefficient of the Group Fourier transform given by $\left\langle \what{f}\left(\pi_\lambda\right)e_0\:,\:e_0\right\rangle$\:. 
$\Hc f$ is continuous on $\mathfrak{a}$ and also satisfies the Riemann-Lebesgue Lemma \cite[Theorem 1.8, Chapter 3, p. 209]{H}:
\begin{equation} \label{Riemann-Lebesgue}
\displaystyle\lim_{\|\lambda\| \to \infty} \Hc f(\lambda)=0\:.
\end{equation}

The $L^2$-spectrum of $\Delta$ is given by $(-\infty,-\|\rho\|^2]$. We consider the shifted Laplace-Beltrami operator $\La:= \Delta + \|\rho\|^2$, with $L^2$-spectrum $(-\infty,0]$. Then for $\xi >0$, the resolvent operator $\left(\xi I - \La\right)^{-1}$ is realized by convolution on the right with the $K$-biinvariant tempered distribution $K_\xi$ on $G$ given by \cite[pp. 279-280]{A},
\begin{equation*} 
K_\xi(x)= \int_0^\infty e^{-t\xi}\:h_t(x)\:dt\:,
\end{equation*}
where $h_t$ is the heat kernel on $\X$.

We will also require the following result from complex analysis which is known as Vivanti-Pringsheim theorem:
\begin{lemma} \cite[p. 235]{R} \label{pringsheim}
Let the power series $g(z)=\displaystyle\sum_{\nu=0}^\infty b_{\nu}\:z^{\nu}$ have positive finite radius of convergence $R$ and suppose that all but finitely many of its coefficients $b_{\nu}$ are real and non-negative. Then $z_0:=R$ is a singular point of $g$.
\end{lemma}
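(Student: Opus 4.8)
The plan is to give the classical proof of the Vivanti--Pringsheim theorem, which proceeds by contradiction by expanding $g$ in a Taylor series about an interior point of the segment $(0,R)$ and rearranging a series with non-negative terms.

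First I would make two reductions. To handle the phrase ``all but finitely many'', let $S$ be the finite set of indices $\nu$ for which $b_\nu$ fails to be real and non-negative, and split $g=P+h$ where $P(z)=\sum_{\nu\in S}b_\nu z^\nu$ is a polynomial (hence entire) and $h(z)=\sum_{\nu\notin S}b_\nu z^\nu$. Since $P$ is entire, $h=g-P$ has the same radius of convergence $R$ and exactly the same set of singular points as $g$, and all coefficients of $h$ are real and non-negative; so it suffices to prove the statement for $h$. Next, replacing the series $\sum b_\nu z^\nu$ by $\sum b_\nu R^\nu z^\nu$ rescales $R$ to $1$, keeps the coefficients non-negative (as $R>0$), and turns the assertion ``$R$ is singular'' into ``$1$ is singular''. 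Thus I may assume $R=1$, $b_\nu\ge 0$ for all $\nu$, and I must show that $z_0=1$ is a singular point of $g$.

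Suppose, for contradiction, that $1$ is a regular point. Then $g$ extends holomorphically to an open set $U:=D(0,1)\cup D(1,\varepsilon)$ for some $\varepsilon>0$. Fix a real $r\in(0,1)$. The key elementary observation is that $\overline{D(r,1-r)}\subset U$: any $w$ with $|w-r|\le 1-r$ satisfies $|w|\le |w-r|+r\le 1$, and equality in $|w|=1$ forces equality in the triangle inequality, which gives $w=1\in U$; every other such $w$ has $|w|<1$ and so lies in $D(0,1)\subset U$. Since $\overline{D(r,1-r)}$ is compact and $U$ is open, there is $\delta>1-r$ with $D(r,\delta)\subset U$. Consequently the Taylor series of $g$ about $r$, namely $\sum_{k\ge 0}\frac{g^{(k)}(r)}{k!}(z-r)^k$, has radius of convergence at least $\delta$, hence converges absolutely at some real $s$ with $1<s<r+\delta$.

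Finally I would carry out the Pringsheim rearrangement. Differentiating $g(z)=\sum_\nu b_\nu z^\nu$ term by term at the interior point $r\in(0,1)$ gives
\[
\frac{g^{(k)}(r)}{k!}=\sum_{\nu\ge k}b_\nu\binom{\nu}{k}r^{\nu-k},
\]
a convergent series of non-negative terms (here $b_\nu\ge 0$ and $r>0$). Hence the double series $\sum_{k\ge 0}(s-r)^k\sum_{\nu\ge k}b_\nu\binom{\nu}{k}r^{\nu-k}$ has all terms non-negative (note $s>1>r$) and equals the convergent series $\sum_k\frac{g^{(k)}(r)}{k!}(s-r)^k$; by the rearrangement theorem for non-negative series I may interchange the order of summation to obtain
\[
\sum_{\nu\ge 0}b_\nu\sum_{k=0}^{\nu}\binom{\nu}{k}r^{\nu-k}(s-r)^k=\sum_{\nu\ge 0}b_\nu\,s^\nu<\infty.
\]
Since $s>1=R$, this contradicts the fact that $R$ is the radius of convergence of $\sum b_\nu z^\nu$. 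Therefore $z_0=1$ (equivalently $z_0=R$ in the original normalization) is a singular point of $g$. The only delicate points are the two reductions at the start and the geometric claim $\overline{D(r,1-r)}\subset U$, which is what lets the expansion about $r$ reach strictly past the boundary circle; the rearrangement step is then immediate from non-negativity of the coefficients.
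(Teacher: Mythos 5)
Your proof is correct and complete: the two initial normalizations, the geometric inclusion $\overline{D(r,1-r)}\subset U$ (with the triangle-inequality equality case correctly forcing $w=1$), and the interchange of the two non-negative sums are all justified, and this is precisely the classical Pringsheim rearrangement argument. The paper itself gives no proof of this lemma---it is quoted from Remmert \cite[p.~235]{R}, whose proof is the same argument---so your write-up matches the cited source and there is nothing further to compare.
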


\section{Proofs of Theorem \ref{thm} and the Euclidean result}
In this section, we prove Theorems \ref{thm}  and \ref{Rn-thm}.
\begin{proof}[Proof of Theorem \ref{thm}]
Let 
\begin{equation*}
N_1 = \min\{n \in [2,N] \mid a_n>0\}\:\:\text{ and }\:\:N_2 = \max\{n \in [2,N] \mid a_n>0\}\:.
\end{equation*}
We will inductively show that for each $j \in \N \cup \{0\}$, there exists a sequence $\{m_{j,l}\}_{l=1}^\infty$ of non-negative integers such that
\begin{equation} \label{finite-series}
\Psi_j = \displaystyle\sum_{l=1}^\infty m_{j,l} \left(*^l \psi\right)\:,
\end{equation}
satisfying
\begin{equation} \label{item-i}
m_{j,l}=0\:,\:\text{ for } l \ge N^j_2+1\:.
\end{equation}
To prove $(\ref{item-i})$, we see that it is true for $j=0$ as 
\begin{equation*}
m_{0,l} =
\begin{cases}
	 1\:, \text{ if } l=1 \\
	0\:, \text{ if } l\ge 2  \:.
	\end{cases}
\end{equation*}
Let us assume $(\ref{item-i})$ to be true for $j=k$ and then prove it for $j=k+1$. By the iterative definition,
\begin{equation*}
\Psi_{k+1} = \psi \:+\:\displaystyle\sum_{n=2}^N a_n \left(*^n \Psi_k\right)\:.
\end{equation*}
Now using the induction hypothesis, we can write
\begin{equation*}
\Psi_{k+1} = \psi \:+\:\displaystyle\sum_{n=N_1}^{N_2} a_n \left[*^n \left\{\displaystyle\sum_{l=1}^{N^k_2}m_{k,l}\:\left(*^l \psi\right)\right\}\right]= \displaystyle\sum_{l=1}^{N^{k+1}_2} b_l \left(*^l \psi\right)\:,
\end{equation*}
for some non-negative integers $b_l$ for $l \in [1,N^{k+1}_2]$\:. This gives $(\ref{item-i})$.

We now claim that 
\begin{equation} \label{item-ii}
m_{j+1,l} \ge m_{j,l}\:\text{ for all }l \in \N\:.
\end{equation}
To prove $(\ref{item-ii})$, we see that it is true for $l=1$ as $N_1 \ge 2$ and hence $m_{j,1}=1$ for all $j \in \N \cup \{0\}$. Now assuming $(\ref{item-ii})$ to be true for $l \le k$, we get $(\ref{item-ii})$ for $l=k+1$ by noting that
\begin{equation*}
\Psi_{j+2} - \Psi_{j+1} = \displaystyle\sum_{n=N_1}^{N_2} a_n \displaystyle\sum_{p=0}^{n-1} \left(*^p \Psi_j\right) * \left(\Psi_{j+1}-\Psi_j\right) * \left(*^{n-p-1} \Psi_{j+1}\right)
\end{equation*}
and the induction hypothesis.

By $(\ref{item-i})$ and $(\ref{item-ii})$, we get that $\{\Psi_j\}_{j=0}^\infty$ is an increasing sequence of non-negative integrable functions on $G$. We next claim that
\begin{equation}\label{item-iii}
m_{j,l} = m_{j+1,l}\:\text{ for all }j \ge l\:.
\end{equation}
To prove $(\ref{item-iii})$, we note that
\begin{equation*}
\Psi_1 - \Psi_0 = \displaystyle\sum_{n=N_1}^{N_2} a_n \left(*^n \psi\right) \:,
\end{equation*}
and for $j \ge 1$,
\begin{eqnarray*}
\Psi_{j+1} - \Psi_j &=& \displaystyle\sum_{n=N_1}^{N_2} a_n \displaystyle\sum_{p=0}^{n-1} \left(*^p \Psi_{j-1}\right) * \left(\Psi_j-\Psi_{j-1}\right) * \left(*^{n-p-1} \Psi_j\right) \\
&=& \left(\Psi_j-\Psi_{j-1}\right)*\displaystyle\sum_{n=N_1}^{N_2} a_n \displaystyle\sum_{p=0}^{n-1} \left(*^p \Psi_{j-1}\right) * \left(*^{n-p-1} \Psi_j\right) \\
&=& \left(\Psi_j-\Psi_{j-1}\right)*\displaystyle\sum_{n=N_1}^{N_2} a_n \left\{n \left(*^{n-1} \psi \right) + \cdots\right\} \\
&=& \left(\Psi_j-\Psi_{j-1}\right)* \left\{N_1a_{N_1}\left(*^{N_1-1} \psi \right)+\cdots\right\} \\
&=& \left(\Psi_1-\Psi_0\right)* \left\{\left(N_1a_{N_1}\right)^{j-1}\left(*^{(N_1-1)(j-1)} \psi \right)+\cdots\right\} \\
&=& \left\{a_{N_1}\left(*^{N_1} \psi \right)+\cdots\right\}* \left\{\left(N_1a_{N_1}\right)^{j-1}\left(*^{(N_1-1)(j-1)} \psi \right)+\cdots\right\} \\
&=& N_1^{j-1}a^j_{N_1}\left(*^{(N_1j-j+1)} \psi \right)+\cdots\:.
\end{eqnarray*}
Now as $N_1 \ge 2$, $(\ref{item-iii})$ follows.

From $(\ref{item-iii})$, it follows that $\Psi_j \uparrow \Psi$ where 
\begin{equation} \label{soln}
\Psi = \displaystyle\sum_{l=1}^\infty m_{l,l} \left(*^l \psi\right)\:.
\end{equation}
Finally, we will show that 
\begin{equation} \label{item-iv}
\|\Psi_j\|_{L^1(G)} = \displaystyle\sum_{l=1}^{N^j_2}  m_{j,l}\:\|\psi\|^l_{L^1(G)}\le \displaystyle\sum_{l=1}^{N^j_2}  m_{j,l}\:\Qc(t_{\Qc})^l \le t_{\Qc}\:.
\end{equation}
Assuming $(\ref{item-iv})$, we see that the series in (\ref{soln}) converges absolutely. Then the desired properties of $\Psi$ follow. So we are left to prove $(\ref{item-iv})$. To show $(\ref{item-iv})$ we note that for $j=0$, the assertion is true as by (\ref{tQ_prop1}), it follows that
\begin{equation*}
\|\Psi_0\|_{L^1(G)} = \|\psi\|_{L^1(G)} \le \Qc(t_{\Qc}) < t_{\Qc}\:.
\end{equation*}
For general $j \ge 1$, by construction and the hypothesis, the equality and the first inequality in (\ref{item-iv}) follows. So we assume
\begin{equation} \label{final-estimate}
\displaystyle\sum_{l=1}^{N^j_2}  m_{j,l}\:\Qc(t_{\Qc})^l \le t_{\Qc}\:,
\end{equation}
to be true for $j=k$ and show it to be true for $j=k+1$. Now consider a non-negative $\psi \in L^1(G)$ with $\int_G\psi = \Qc\left(t_{\Qc}\right)$. Then defining $\{\Psi_j\}_{j=0}^\infty$ iteratively and then integrating out the relation,
\begin{equation*}
\Psi_{k+1} = \psi \:+\:\displaystyle\sum_{n=2}^N a_n \left(*^n \Psi_k\right)\:,
\end{equation*}
we get that
\begin{equation} \label{final-estimate-eq1}
\int_G \Psi_{k+1} = \int_G \psi  +\displaystyle\sum_{n=2}^N a_n \left(\int_G \Psi_k\right)^n\:.
\end{equation}
Now using (\ref{finite-series}) and (\ref{item-i}), we get
\begin{equation} \label{final-estimate-eq2}
\int_G \Psi_{k+1} = \displaystyle\sum_{l=1}^{N^{k+1}_2} m_{k+1,l}\: \left(\int_G \psi\right)^l = \displaystyle\sum_{l=1}^{N^{k+1}_2} m_{k+1,l}\: \Qc(t_{\Qc})^l\:,
\end{equation}
and also
\begin{eqnarray} \label{final-estimate-eq3}
\displaystyle\sum_{n=2}^N a_n \left(\int_G \Psi_k\right)^n &=& \displaystyle\sum_{n=2}^N a_n \left[\bigintssss_G \left\{\displaystyle\sum_{l=1}^{N^k_2} m_{k,l} \left(*^l \psi\right)\right\}\right]^n \nonumber\\
&=& \displaystyle\sum_{n=2}^N a_n \left[\displaystyle\sum_{l=1}^{N^k_2} m_{k,l} \left(\bigintssss_G \psi\right)^l\right]^n \nonumber\\
&=& \displaystyle\sum_{n=2}^N a_n \left(\displaystyle\sum_{l=1}^{N^k_2} m_{k,l} \Qc(t_{\Qc})^l\right)^n\:.
\end{eqnarray}
Then plugging (\ref{final-estimate-eq2}) and (\ref{final-estimate-eq3}) in (\ref{final-estimate-eq1}) and using (\ref{final-estimate}) for $j=k$ along with the fact that the polynomial $\Pc(t):=\displaystyle\sum_{n=2}^N a_n\:t^n$ is strictly increasing on $(0,\infty)$, it follows that 
\begin{equation*}
\displaystyle\sum_{l=1}^{N^{k+1}_2} m_{k+1,l}\: \Qc(t_{\Qc})^l = \Qc(t_{\Qc}) + \displaystyle\sum_{n=2}^N a_n \left(\displaystyle\sum_{l=1}^{N^k_2} m_{k,l} \Qc(t_{\Qc})^l\right)^n \le \Qc(t_{\Qc}) + \displaystyle\sum_{n=2}^N a_n\:t^n_{\Qc} = t_{\Qc}\:.
\end{equation*}
This completes the proof of (\ref{item-iv}) and hence also of Theorem \ref{thm}.
\end{proof}
We now present the proof of Theorem \ref{Rn-thm}.
\begin{proof}[Proof of Theorem \ref{Rn-thm}]
Let $f$ be as in the hypothesis of Theorem \ref{Rn-thm}. We now define,
\begin{equation} \label{Rn-pf-eq1}
\psi := f - \displaystyle\sum_{n=2}^N a_n \left(*^n f\right)\:.
\end{equation}
Then $\psi \in L^1(\R^d)$ is non-negative. Moreover, keeping in mind the hypothesis that $\int_{\R^d} f \ge 0$, we integrate the relation (\ref{Rn-pf-eq1}) and then use the fact that $\Qc$ attains a unique positive maximum on $(0,\infty)$ at $t_{\Qc}$ to obtain
\begin{equation*}
0 \le \int_{\R^d} \psi = \int_{\R^d} f - \displaystyle\sum_{n=2}^N a_n \left(\int_{\R^d} f\right)^n = \Qc\left(\int_{\R^d} f\right) \le \Qc(t_{\Qc})\:. 
\end{equation*}
Hence we also get
\begin{equation*}
\|\psi\|_{L^1(\R^d)} \le \Qc(t_{\Qc})\:. 
\end{equation*}
Then applying Theorem \ref{thm}, we get a $\Psi \in L^1(\R^d)$ such that 
\begin{equation} \label{Rn-pf-eq2}
\Psi \ge 0\:,\: \|\Psi\|_{L^1(\R^d)} \le t_{\Qc}\: \text{ and } \psi = \Psi - \displaystyle\sum_{n=2}^N a_n \left(*^n \Psi\right).
\end{equation}
Now if we can show that the Fourier transforms $\F f$ and $\F \Psi$ agree, that is,
\begin{equation} \label{Rn-pf-eq3}
\F f(\xi)= \F \Psi(\xi)\:,\: \text { for all } \xi \in \R^d\:,
\end{equation}
then by injectivity of the Fourier transform, it will follow that $f=\Psi$ as elements of $L^1(\R^d)$ and $f$ will inherit the desired properties from $\Psi$ stated in (\ref{Rn-pf-eq2}), which will then complete the proof of Theorem \ref{Rn-thm}. Thus it is enough to prove (\ref{Rn-pf-eq3}). 

As $\Psi$ is non-negative, we claim that 
\begin{equation} \label{Rn-pf-eq4}
\left|\F \Psi(\xi)\right| < t_{\Qc}\:,\:\text{ for all } \xi \in \R^d \setminus \{0\}\:.
\end{equation}
If $\Psi$ is a trivial element in $L^1(\R^d)$, that is, $\Psi$ is zero almost everywhere, then trivially
\begin{equation*}
\left|\F \Psi(\xi)\right|=0< t_{\Qc}\:,\:\text{ for all } \xi \in \R^d \:.
\end{equation*}
To see (\ref{Rn-pf-eq4}) for $\Psi$ non-trivial, by non-negativity of $\Psi$, we have for all $\xi \in \R^d$,
\begin{equation} \label{Rn-pf-eq5}
\left|\F \Psi(\xi)\right| = \left|\int_{\R^d} \Psi(x)\:e^{-2\pi ix\cdot\xi}\:dx\right| \le  \int_{\R^d} \Psi(x)\:dx\:.
\end{equation}
We now find out for which $\xi$, one can actually expect to get equality in (\ref{Rn-pf-eq5}). Equality above and the fact that $\Psi>0$ on a set of positive Lebesgue measure would yield a constant $\alpha \in \C$ such that
\begin{equation*}
\alpha\:e^{-2\pi ix\cdot\xi}=1\:,
\end{equation*}
holds on a set of positive Lebesgue measure and hence by real-analyticity, actually holds on $\R^d$. Then for $1\le j \le d$, considering $j$th partial derivatives $\frac{\partial}{\partial x_j}$, we get
\begin{equation*}
-2\pi i \xi_j\: e^{-2\pi ix\cdot\xi}=0\:,
\end{equation*} 
which implies $\xi_j=0$ for all $1\le j \le d$. Thus (\ref{Rn-pf-eq5}) is strict inequality unless $\xi=0$. Then by (\ref{Rn-pf-eq2}),
\begin{equation*}
\left|\F \Psi(\xi)\right| < \int_{\R^d} \Psi \le t_{\Qc}\:,\:\text{ for all } \xi \in \R^d \setminus \{0\}\:.
\end{equation*}
This proves the claim (\ref{Rn-pf-eq4}).

We now define
\begin{equation*}
\Omega:= \left\{\xi \in \R^d \setminus \{0\} \mid \F f(\xi)= \F \Psi(\xi)\right\}\subset \R^d \setminus \{0\}\:,
\end{equation*}
and by adopting a topological argument, we show that $\Omega = \R^d \setminus \{0\}$. Since both $\F f$ and $\F \Psi$ are continuous, it follows that $\Omega$ is closed in $\R^d \setminus \{0\}$. To show that  $\Omega$ is also open in $\R^d \setminus \{0\}$, we let $\xi_0 \in \Omega$. Then by (\ref{Rn-pf-eq4}), there exists $\varepsilon>0$ such that
\begin{equation} \label{Rn-pf-eq6}
\left|\F \Psi(\xi_0)\right| < (1-\varepsilon)t_{\Qc}\:.
\end{equation}
By continuity of $\F f$, there exists $\delta>0$ such that
\begin{equation}\label{Rn-pf-eq7}
\left|\left|\F f(\xi_0)\right|-\left|\F f(\xi)\right|\right| < \varepsilon\: t_{\Qc}\:,\:\text{ for all } \xi \in B(\xi_0,\delta)\:,
\end{equation}
where $B(\xi_0,\delta) \subset \R^d \setminus \{0\}$, is the Euclidean ball with center $\xi_0$ and radius $\delta$. Now as $\xi_0 \in \Omega$, by (\ref{Rn-pf-eq6}) we have
\begin{equation*}
\left|\F f(\xi_0)\right|=\left|\F \Psi(\xi_0)\right| < (1-\varepsilon)t_{\Qc}\:,
\end{equation*}
which combined with (\ref{Rn-pf-eq7}) yields
\begin{equation*}
\left|\F f(\xi)\right| < t_{\Qc}\:,\:\text{ for all } \xi \in B(\xi_0,\delta)\:.
\end{equation*}
So in particular both
\begin{equation}\label{Rn-pf-eq8}
\left|\F \Psi(\xi)\right|<t_{\Qc} \:\text{ and }\:\left|\F f(\xi)\right| <t_{\Qc}\:,\:\text{ for all } \xi \in B(\xi_0,\delta)\:.
\end{equation}
Now as by (\ref{Rn-pf-eq1}) and (\ref{Rn-pf-eq2}),
\begin{equation*}
f - \displaystyle\sum_{n=2}^N a_n \left(*^n f\right) = \psi = \Psi - \displaystyle\sum_{n=2}^N a_n \left(*^n \Psi\right)\:,
\end{equation*} 
by taking Fourier transform, we have
\begin{equation} \label{Rn-pf-eq9}
\F f - \displaystyle\sum_{n=2}^N a_n \left(\F f\right)^n =  \F \Psi - \displaystyle\sum_{n=2}^N a_n \left(\F \Psi\right)^n\:.
\end{equation}
Then in view of (\ref{Rn-pf-eq8}), we consider the complex polynomial $\Qc : \C \to \C$ and the disk 
\begin{equation*}
\mathcal{D} := \left\{z \in \C \mid |z|< t_{\Qc}\right\}\:.
\end{equation*}
We claim that $\Qc$ is injective on $\mathcal{D}$. To prove the claim, we write
\begin{equation*}
\Qc(z) = z - \displaystyle\sum_{n=2}^N a_n z^n = z - \Pc(z)\:.
\end{equation*}
Then
\begin{equation*}
\Qc'(z) = 1 - \displaystyle\sum_{n=2}^N na_n z^{n-1} = 1 - \Pc'(z)\:,
\end{equation*}
and hence $\Qc'(0)=1$ and $\Pc'(0)=0$. Now as $\Qc'$ is a polynomial in one complex variable, by the Fundamental theorem of Algebra, it has finitely many zeroes. Let $z_0$ be a zero of $\Qc'$ of smallest modulus. We now consider the disk 
\begin{equation*}
\mathcal{D}' := \left\{ z \in \C \mid |z| < |z_0| \right\}\:. 
\end{equation*}
Now as $z_0$ is a zero of $\Qc'$ of smallest modulus, $\Pc'(z) \ne 1$ for all $z \in \mathcal{D}'$. We focus on the line segment $(0,|z_0|) \subset \mathcal{D}' \cap (0,\infty)$. As the coefficients $a_n$ are non-negative with at least one strictly positive, it follows that  $\Pc'$ is strictly increasing on $(0,|z_0|)$. Combining it with the (already noted) facts that  $\Pc'(0)=0$ and $\Pc'(z) \ne 1$ for all $z \in \mathcal{D}'$, it follows that 
\begin{equation} \label{Rn-inj_eq1}
\Pc'(z)< 1 \:,\text{ for } z \in (0,|z_0|)\:.
\end{equation}
Then by non-negativity of the coefficients $a_n$ and (\ref{Rn-inj_eq1}), it follows that for all $z \in \mathcal{D}'$,
\begin{equation*}
\left|\Pc'(z)\right| \le \displaystyle\sum_{n=2}^N na_n |z|^{n-1} < 1 \:.
\end{equation*}
Then by the geometric series expansion, we can write $1/\Qc'$ in the form of a convergent power series in $\mathcal{D}'$,
\begin{equation*}
\frac{1}{\Qc'(z)}= \frac{1}{1 - \Pc'(z)}= 1 + \sum_{n=1}^\infty \left(\Pc'(z)\right)^n\:,
\end{equation*}
with real and non-negative coefficients and radius of convergence $|z_0|$. Then by Vivanti-Pringsheim theorem (Lemma \ref{pringsheim}), $|z_0|$ is a singularity of $1/\Qc'$. Hence, $|z_0| \in (0,\infty)$ is a zero of $\Qc'$. But since $t_{\Qc}$ is the unique zero of  $\Qc'$ in $(0,\infty)$, it follows that $|z_0|=t_{\Qc}$ and $\mathcal{D}=\mathcal{D}'$. Consequently, we get that 
\begin{equation}  \label{Rn-inj_eq2}
\left|\Pc'(z)\right| < 1\:,\text{ for all } z \in \mathcal{D}.
\end{equation}
Now for two distinct $z_1,z_2 \in \mathcal{D}$, we have
\begin{equation*}
\Qc(z_1)-\Qc(z_2) = (z_1-z_2) -\int_{z_2}^{z_1} \Pc'(\xi)\:d\xi\:, 
\end{equation*}
where the above line integral is along the line segment joining $z_2$ to $z_1$. Then by triangle inequality and (\ref{Rn-inj_eq2}), it follows that
\begin{equation*}
\left|\Qc(z_1)-\Qc(z_2) \right| \ge \left|z_1-z_2\right| -\left|\int_{z_2}^{z_1} \Pc'(\xi)\:d\xi \right|> \left|z_1-z_2\right| - \left|z_1-z_2\right| =0\:.
\end{equation*} 
This proves the claim that $\Qc$ is injective on $\mathcal{D}$.

 Now (\ref{Rn-pf-eq8}) can be rewritten as
\begin{equation*}
\F \Psi(\xi)\:,\:\F f(\xi) \in \mathcal{D}\:,\:\text{ for all } \xi \in B(\xi_0,\delta)\:,
\end{equation*}
which along with (\ref{Rn-pf-eq9}) and the injectivity of $\Qc$ on $\mathcal{D}$ yield,
\begin{equation*}
\F \Psi(\xi)=\F f(\xi) \:,\:\text{ for all } \xi \in B(\xi_0,\delta)\:,
\end{equation*}
that is, $B(\xi_0,\delta) \subset \Omega$ and hence $\Omega$ is also open in $\R^d \setminus \{0\}$. 

We finally show that $\Omega$ is non-empty. By the Riemann-Lebesgue lemma, there exists $\xi_0 \in \R^d \setminus \{0\}$ such that
\begin{equation*} 
\left|\F f(\xi_0)\right| <  t_{\Qc}\:.
\end{equation*} 
Thus by (\ref{Rn-pf-eq4}), we get that both $\F f(\xi_0), \F \Psi(\xi_0) \in \mathcal{D}$ and then by (\ref{Rn-pf-eq9}) and the injectivity of $\Qc$ on $\mathcal{D}$, we obtain  $\F f(\xi_0)= \F \Psi(\xi_0)$, that is, $\xi_0 \in \Omega$ and hence $\Omega$ is non-empty. Moreover, for the special when $d=1$, implementing the above argument involving Riemann-Lebesgue Lemma on both the connected components of $\R \setminus \{0\}$, we can also see that $\Omega$ has non-empty intersections with both the connected components of $\R \setminus \{0\}$. 

For $d \ge 2$, since $\Omega$ is a non-empty subset of $\R^d \setminus \{0\}$ which is both closed as well as open, we have $\Omega=\R^d \setminus \{0\}$ and thus (\ref{Rn-pf-eq3}) is true for $\R^d \setminus \{0\}$. For $d=1$, $\Omega$ is both closed as well as open in $\R \setminus \{0\}$ and moreover, has non-empty intersections with both the connected components of $\R \setminus \{0\}$ and thus (\ref{Rn-pf-eq3}) is true for $\R \setminus \{0\}$. The equality at $0$ then follows by continuity. This completes the proof of Theorem \ref{Rn-thm}.

\end{proof}

\begin{remark} \label{Rn-remark}
The non-negativity of the integral condition in Theorem \ref{Rn-thm} is necessary. For the counter-example, we refer to \cite[point (2) of Remark $1.3$]{NS}.
\end{remark}

\section{Results on Symmetric Spaces}
In this section, we prove Theorems \ref{thm1}  and \ref{thm2}.
\begin{proof}[Proof of Theorem \ref{thm1}]
Let $f$ be as in the hypothesis of Theorem \ref{thm1}. We now define,
\begin{equation} \label{pf1eq1}
\psi := f - \displaystyle\sum_{n=2}^N a_n \left(*^n f\right)\:.
\end{equation}
Then $\psi \in L^1(G/K)$ is non-negative. Moreover, keeping in mind the hypothesis that $\int_G f \ge 0$, integrating the relation (\ref{pf1eq1}) and then using properties of $t_{\Qc}$ we again get
\begin{equation*}
\|\psi\|_{L^1(G)} \le \Qc(t_{\Qc})\:. 
\end{equation*}
Then applying Theorem \ref{thm}, we get a $\Psi \in L^1(G/K)$ such that 
\begin{equation} \label{pf1eq2}
\Psi \ge 0\:,\: \|\Psi\|_{L^1(G)} \le t_{\Qc}\: \text{ and } \psi = \Psi - \displaystyle\sum_{n=2}^N a_n \left(*^n \Psi\right).
\end{equation}
Now by injectivity of the Helgason Fourier transform, it suffices to show that
\begin{equation} \label{pf1eq3}
\what f(\pi_\lambda)= \what \Psi(\pi_\lambda)\:,\: \text { for all } \lambda \in \mathfrak{a}\:,
\end{equation}
as then it will follow that $f=\Psi$ as elements of $L^1(G/K)$ and $f$ will inherit the desired properties from $\Psi$ stated in (\ref{pf1eq2}), which will then complete the proof of Theorem \ref{thm1}. Thus it is enough to prove (\ref{pf1eq3}). 

By (\ref{pf1eq1}) and (\ref{pf1eq2}), we have
\begin{equation*}
f - \displaystyle\sum_{n=2}^N a_n \left(*^n f\right)= \psi = \Psi - \displaystyle\sum_{n=2}^N a_n \left(*^n \Psi\right)\:,
\end{equation*}
which upon taking the Group Fourier transform becomes
\begin{equation} \label{pf1eq4}
\what f(\pi_\lambda) - \displaystyle\sum_{n=2}^N a_n \what f(\pi_\lambda)^n= \what \Psi(\pi_\lambda) - \displaystyle\sum_{n=2}^N a_n \what \Psi(\pi_\lambda)^n\:.
\end{equation}
The equality above is of bounded operators on the Hilbert space $L^2(K/M)$. We consider the orthonormal basis $\{e_j\}_{j=0}^\infty$ of $L^2(K/M)$ with the unique $K$-fixed vector $e_0 \equiv 1$. Now for $(p,q) \in \left(\N \cup \{0\}\right)^2$, the equality (\ref{pf1eq4}) yields the corresponding equality in matrix coefficients
\begin{equation} \label{pf1eq5}
\what f(\pi_\lambda)_{p,q} - \displaystyle\sum_{n=2}^N a_n \left(\what f(\pi_\lambda)^n\right)_{p,q}= \what \Psi(\pi_\lambda)_{p,q} - \displaystyle\sum_{n=2}^N a_n \left(\what \Psi(\pi_\lambda)^n\right)_{p,q}\:,
\end{equation}
where 
\begin{equation*}
\what f(\pi_\lambda)_{p,q} = \left\langle \what f(\pi_\lambda)e_q\:,\:e_p \right\rangle\:,
\end{equation*}
and so on. We first note that as $f$ is right $K$-invariant,
\begin{equation} \label{pf1eq6}
\what f(\pi_\lambda)_{p,q} = 0\:,\: \text{ if } (p,q) \in \left(\N \cup \{0\}\right) \times \N\:,
\end{equation}
and similarly for $\Psi$. We now claim that for all $n \ge 2$,
\begin{equation} \label{pf1eq7}
\left(\what f(\pi_\lambda)^n\right)_{p,q} = \what f(\pi_\lambda)_{p,q}\:\what f(\pi_\lambda)^{n-1}_{0,0}\:.
\end{equation}
We prove the above claim by induction. For $n=2$,
\begin{eqnarray*}
\left(\what f(\pi_\lambda)^2\right)_{p,q} = \left\langle \what f(\pi_\lambda)\left(\what f(\pi_\lambda)e_q\right)\:,\:e_p \right\rangle &=& \left\langle \what f(\pi_\lambda)\left(\displaystyle\sum_{r=0}^\infty \left\langle\what f(\pi_\lambda)e_q\:,\:e_r\right\rangle e_r \right)\:,\:e_p \right\rangle \\
&=& \displaystyle\sum_{r=0}^\infty \left\langle\what f(\pi_\lambda)e_q\:,\:e_r\right\rangle \left\langle\what f(\pi_\lambda)e_r\:,\:e_p\right\rangle \\
&=& \displaystyle\sum_{r=0}^\infty \what f(\pi_\lambda)_{r,q}\:\what f(\pi_\lambda)_{p,r}\:.
\end{eqnarray*}
Thus if $q \in \N$, by (\ref{pf1eq6}) and above
\begin{equation*}
\left(\what f(\pi_\lambda)^2\right)_{p,q} = 0 = \what f(\pi_\lambda)_{p,q}\:\what f(\pi_\lambda)_{0,0}\:.
\end{equation*}
For $q=0$, again by (\ref{pf1eq6}) and above
\begin{equation*}
\left(\what f(\pi_\lambda)^2\right)_{p,0} = \what f(\pi_\lambda)_{0,0}\:\what f(\pi_\lambda)_{p,0}\:,
\end{equation*}
and hence (\ref{pf1eq7}) is established for $n=2$.

Now assuming (\ref{pf1eq7}) for $n=j$, we see that 
\begin{eqnarray*}
\left(\what f(\pi_\lambda)^{j+1}\right)_{p,q} = \left\langle \what f(\pi_\lambda)\left(\what f(\pi_\lambda)^j e_q\right)\:,\:e_p \right\rangle &=& \left\langle \what f(\pi_\lambda)\left(\displaystyle\sum_{r=0}^\infty \left\langle\what f(\pi_\lambda)^j e_q\:,\:e_r\right\rangle e_r \right)\:,\:e_p \right\rangle \\
&=& \displaystyle\sum_{r=0}^\infty \left\langle\what f(\pi_\lambda)^j e_q\:,\:e_r\right\rangle \left\langle\what f(\pi_\lambda)e_r\:,\:e_p\right\rangle \\
&=& \displaystyle\sum_{r=0}^\infty \left(\what f(\pi_\lambda)^j\right)_{r,q}\:\what f(\pi_\lambda)_{p,r}\\
&=& \displaystyle\sum_{r=0}^\infty \what f(\pi_\lambda)_{r,q}\:\what f(\pi_\lambda)^{j-1}_{0,0}\:\what f(\pi_\lambda)_{p,r}\:.
\end{eqnarray*}
Now if $q \in \N$, by (\ref{pf1eq6}),
\begin{equation*}
\left(\what f(\pi_\lambda)^{j+1}\right)_{p,q} = 0 = \what f(\pi_\lambda)_{p,q}\:\what f(\pi_\lambda)^j_{0,0}\:,
\end{equation*}
and for $q=0$,
\begin{equation*}
\left(\what f(\pi_\lambda)^{j+1}\right)_{p,0} = \what f(\pi_\lambda)^j_{0,0}\:\what f(\pi_\lambda)_{p,0}\:,
\end{equation*}
this proves the claim (\ref{pf1eq7}). Similar result is true for $\Psi$ as well.

Now noting (\ref{pf1eq6}) and plugging (\ref{pf1eq7}) in (\ref{pf1eq5}), we obtain for $p \in \N \cup \{0\}$,
\begin{equation}\label{pf1eq8}
\what f(\pi_\lambda)_{p,0}\left(1 - \displaystyle\sum_{n=2}^N a_n \what f(\pi_\lambda)^{n-1}_{0,0}\right)= \what \Psi(\pi_\lambda)_{p,0}\left(1 - \displaystyle\sum_{n=2}^N a_n \what \Psi(\pi_\lambda)^{n-1}_{0,0}\right)\:.
\end{equation}
For $p=0$, (\ref{pf1eq8}) turns out to be 
\begin{equation}\label{pf1eq9}
\what f(\pi_\lambda)_{0,0} - \displaystyle\sum_{n=2}^N a_n \what f(\pi_\lambda)^n_{0,0}= \what \Psi(\pi_\lambda)_{0,0} - \displaystyle\sum_{n=2}^N a_n \what \Psi(\pi_\lambda)^n_{0,0}\:.
\end{equation}
We define
\begin{equation*}
\Omega:= \left\{\lambda \in \mathfrak{a} \mid \what f(\pi_\lambda)_{0,0}= \what \Psi(\pi_\lambda)_{0,0}\right\}\subset \mathfrak{a}\:,
\end{equation*}
and by adopting a topological argument, we show that $\Omega = \mathfrak{a}$. Since both $\lambda \mapsto \what f(\pi_\lambda)_{0,0}$ and $\lambda \mapsto \what \Psi(\pi_\lambda)_{0,0}$ are continuous, it follows that $\Omega$ is closed in $\mathfrak{a}$. To show that  $\Omega$ is also open in $\mathfrak{a}$, we first note that by non-negativity of $\Psi$ and the bound of the spherical functions (\ref{phi_lambda_bound}), we have for all $\lambda \in \mathfrak{a}$
\begin{equation} \label{pf1eq10}
\left|\what \Psi(\pi_\lambda)_{0,0}\right| = \left|\int_G \Psi(x)\:\varphi_{\lambda}(x)\:dx\right| \le  \int_G \Psi(x)\:\left|\varphi_{\lambda}(x)\right|\:dx \le  \int_G \Psi(x)\:dx\:.
\end{equation}
If $\Psi$ is a non-trivial element of $L^1(G/K)$, that is,
\begin{equation*}
E:=\{x \in G \mid \Psi(x)>0\}
\end{equation*}
is a set of positive Haar measure, then we claim that one actually has strict inequality above, that is,
\begin{equation} \label{pf1eq11}
\left|\what \Psi(\pi_\lambda)_{0,0}\right| < \int_G \Psi(x)\:dx\:,\:\text{ for all } \lambda \in \mathfrak{a}\:.
\end{equation}
Indeed, if there exists $\lambda_0 \in \mathfrak{a}$ such that
\begin{equation*}
\left|\what \Psi(\pi_{\lambda_0})_{0,0}\right| = \int_G \Psi(x)\:dx\:,
\end{equation*}
then one has equality throughout (\ref{pf1eq10}) for $\lambda_0$ and hence, in particular,
\begin{equation*}
\int_G \Psi(x)\:\left|\varphi_{\lambda_0}(x)\right|\:dx = \int_G \Psi(x)\:dx\:.
\end{equation*}
Then using the non-negativity of $\Psi$ and the definition of $E$, we get that
\begin{equation*}
\int_E \Psi(x)\:\left(\left|\varphi_{\lambda_0}(x)\right|-1\right)\:dx = 0\:.
\end{equation*}
Now positivity of $\Psi$ on $E$ along with the boundedness (\ref{phi_lambda_bound}) of spherical functions yield the existence of a set of positive Haar measure, say $F$, such that
\begin{equation*}
\left|\varphi_{\lambda_0}(x)\right|=1\:,\:\text{ for all } x \in F\:.
\end{equation*} 
Then the inequalities (\ref{phi_lambda_bound}) and (\ref{phi_lambda_decay}) yield for all $x \in F$,
\begin{equation*}
1 = \left|\varphi_{\lambda_0}(x)\right| \le \varphi_0(x) \le 1\:,
\end{equation*}
and hence 
\begin{equation*}
\varphi_0 \equiv 1\:, \text{ on } F\:.
\end{equation*}
Now as $F$ is a set of positive Haar measure and $\varphi_0$ being an eigenfunction of $\Delta$, is real-analytic, we must have that 
\begin{equation*}
\varphi_0 \equiv 1\:, \text{ on } G\:,
\end{equation*}
but this contradicts the decay of $\varphi_0$ away from the identity (\ref{phi_lambda_decay}). Hence we have the strict inequality (\ref{pf1eq11}), which combined with (\ref{pf1eq2}) yields that
\begin{equation*} 
\left|\what \Psi(\pi_\lambda)_{0,0}\right| < \int_G \Psi(x)\:dx \le t_{\Qc}\:,\:\text{ for all } \lambda \in \mathfrak{a}\:.
\end{equation*}
On the other hand, if $\Psi$ is zero almost everywhere, then trivially we have that
\begin{equation*} 
\left|\what \Psi(\pi_\lambda)_{0,0}\right| =0 < t_{\Qc}\:,\:\text{ for all } \lambda \in \mathfrak{a}\:.
\end{equation*}
Hence in either cases, we have
\begin{equation} \label{pf1eq12}
\left|\what \Psi(\pi_\lambda)_{0,0}\right| <  t_{\Qc}\:,\:\text{ for all } \lambda \in \mathfrak{a}\:.
\end{equation}

Now given $\lambda_0 \in \Omega$, there exists $\varepsilon>0$, such that
\begin{equation} \label{pf1eq13}
\left|\what \Psi(\pi_{\lambda_0})_{0,0}\right| < (1-\varepsilon)t_{\Qc}\:.
\end{equation}
By continuity of $\lambda \mapsto \what f(\pi_\lambda)_{0,0}$, there exists $\delta>0$ such that
\begin{equation}\label{pf1eq14}
\left|\left| \what f(\pi_{\lambda_0})_{0,0}\right|-\left| \what f(\pi_\lambda)_{0,0}\right|\right| < \varepsilon\: t_{\Qc}\:,\:\text{ for all } \lambda \in B(\lambda_0,\delta)\:,
\end{equation}
where $B(\lambda_0,\delta)$ is the ball in $\mathfrak{a}$ with center $\lambda_0$ and radius $\delta$ with respect to the norm on $\mathfrak{a}$ inherited from $\mathfrak{p}$. Now as $\lambda_0 \in \Omega$, by (\ref{pf1eq13}) we have
\begin{equation*}
\left|\what f(\pi_{\lambda_0})_{0,0}\right|=\left|\what \Psi(\pi_{\lambda_0})_{0,0}\right| < (1-\varepsilon)t_{\Qc}\:,
\end{equation*}
which combined with (\ref{pf1eq14}) yields
\begin{equation*}
\left|\what f(\pi_\lambda)_{0,0}\right| < t_{\Qc}\:,\:\text{ for all } \lambda \in B(\lambda_0,\delta)\:.
\end{equation*}
So in particular both
\begin{equation*}
\left|\what \Psi(\pi_\lambda)_{0,0}\right|<t_{\Qc}\:,\text{ and }\:\left|\what f(\pi_\lambda)_{0,0}\right| <t_{\Qc}\:,\:\text{ for all } \lambda \in B(\lambda_0,\delta)\:.
\end{equation*}
Then in view of (\ref{pf1eq9}) and the fact that the complex polynomial $\Qc$ is injective on the disk $\mathcal{D}$ in $\C$ with center $0$ and radius $t_{\Qc}$ (whose proof is contained in the proof of Theorem \ref{Rn-thm}), it follows that
\begin{equation*}
\what \Psi(\pi_\lambda)_{0,0} =\what f(\pi_\lambda)_{0,0}\:,\:\text{ for all } \lambda \in B(\lambda_0,\delta)\:.
\end{equation*}  
Thus $B(\lambda_0,\delta) \subset \Omega$ and hence $\Omega$ is also open in $\mathfrak{a}$. 

We finally show that $\Omega$ is non-empty. To show this, we consider $f^\#$, the left $K$-average of $f$,
\begin{equation*}
f^\#(g):=\int_K f(kg)\:dk\:,\text{ for } g \in G\:.
\end{equation*}
Now as $f \in L^1(G/K)$, we note that $f^\# \in L^1(G//K)$ and moreover, for all $\lambda \in \mathfrak{a}$,
\begin{equation*}
\what f(\pi_{\lambda})_{0,0} = \Hc f^\#(\lambda)\:,
\end{equation*}
the Spherical Fourier transform of $f^\#$. Then by the Riemann-Lebesgue lemma (\ref{Riemann-Lebesgue}) on $\Hc f^\#$, there exists $\lambda_0 \in \mathfrak{a}$ such that
\begin{equation*} 
\left|\what f(\pi_{\lambda_0})_{0,0}\right| <  t_{\Qc}\:.
\end{equation*} 
Thus by (\ref{pf1eq12}), we get that both $\what f(\pi_{\lambda_0})_{0,0}\:,\: \what \Psi(\pi_{\lambda_0})_{0,0} \in \mathcal{D}$ and then by (\ref{pf1eq9}) and the injectivity of $\Qc$ on $\mathcal{D}$, we obtain  $\what f(\pi_{\lambda_0})_{0,0}\:=\: \what \Psi(\pi_{\lambda_0})_{0,0}$, that is, $\lambda_0 \in \Omega$ and hence $\Omega$ is non-empty. Since $\Omega$ is a non-empty subset of $\mathfrak{a}$ which is both closed as well as open, we have $\Omega=\mathfrak{a}$, that is,
\begin{equation}\label{pf1eq16}
\what f(\pi_\lambda)_{0,0}\:=\: \what \Psi(\pi_\lambda)_{0,0}\:,\:\text{ for all } \lambda \in \mathfrak{a}\:.
\end{equation}
Now in view of (\ref{pf1eq12}), we reconsider the complex polynomial 
\begin{equation*}
\Qc(z) = z - \displaystyle\sum_{n=2}^N a_n z^n = z - \Pc(z)\:,
\end{equation*}
and now aim to show that the polynomial 
\begin{equation*}
\Qc_1(z):=\frac{\Qc(z)}{z}=1-\frac{\Pc(z)}{z}
\end{equation*}
has no zeroes in $\mathcal{D}$. To show this it suffices to prove that 
\begin{equation}\label{pf1eq17}
\left|\frac{\Pc(z)}{z}\right|<1\:,\:\text{ for all } z \in \mathcal{D}\:.
\end{equation}
We first note that $\Pc(0)=0$ and then recall (\ref{Rn-inj_eq2}) which states that
\begin{equation*} 
\left|\Pc'(z)\right| < 1\:,\text{ for all } z \in \mathcal{D}.
\end{equation*}
Then for any $z \in \mathcal{D}$, we get
\begin{equation*}
\left|\Pc(z)\right| = \left|\Pc(z)-\Pc(0)\right| =\left|\int_0^z \Pc'(\xi)\:d\xi\right|<|z|\:.
\end{equation*}
Thus we get (\ref{pf1eq17}) as
\begin{equation*}
\left|\frac{\Pc(z)}{z}\right|=\frac{\left|\Pc(z)\right|}{\left|z\right|}<1\:.
\end{equation*}
Hence $\Qc_1$ has no zeroes in $\mathcal{D}$. Now by plugging (\ref{pf1eq16}) in (\ref{pf1eq8}), we note that
\begin{equation*}
\what f(\pi_\lambda)_{p,0}\: \Qc_1\left(\what \Psi(\pi_\lambda)_{0,0}\right)=\what \Psi(\pi_\lambda)_{p,0}\: \Qc_1\left(\what \Psi(\pi_\lambda)_{0,0}\right)\:.
\end{equation*}
Then in view of (\ref{pf1eq12}) and the fact that $\Qc_1$ has no zeroes in $\mathcal{D}$, we get that
\begin{equation*}
\what f(\pi_\lambda)_{p,0}\:=\: \what \Psi(\pi_\lambda)_{p,0}\:,\:\text{ for all } \lambda \in \mathfrak{a}\:, p \in \N \cup \{0\}\:.
\end{equation*}
The above along with (\ref{pf1eq6}) yields (\ref{pf1eq3}) which completes the proof of Theorem \ref{thm1}.
\end{proof}

We now present the proof of Theorem \ref{thm2}:
\begin{proof}[Proof of Theorem \ref{thm2}]
For $\xi >0$, the resolvent operator $\left(\xi I - \La\right)^{-1}$ is realized by convolution on the right with the $K$-biinvariant tempered distribution $K_\xi$ on $G$ given by 
\begin{equation*} 
K_\xi(x)= \int_0^\infty e^{-t\xi}\:h_t(x)\:dt\:,
\end{equation*}
where $h_t$ is the heat kernel on $\X$. Hence $K_\xi$ is non-negative and moreover, an application of Fubini's theorem yields
\begin{equation} \label{pf2eq1}
\int_G K_\xi(x) dx = \int_G  \int_0^\infty e^{-t\xi}\:h_t(x)\:dt\:dx = \int_0^\infty e^{-t\xi}\:dt= \frac{1}{\xi}\:.
\end{equation}
Now the integro-differential equation (\ref{PDE}) can be rewritten as,
\begin{equation*} 
 u = {\left(-\La +\xi \right)}^{-m}\left\{V(1-u)\right\}+{\left(-\La +\xi \right)}^{-m}\left\{\mu \left(*^{m+1} u\right)\right\}\:.
\end{equation*}
Then as $K_\xi,V$ and $u$ all are $K$-biinvariant, the convolutions commute and we can write,
\begin{equation*}
u=\left(*^m K_\xi\right)*\left\{V(1-u)\right\} + \mu \left\{*^m \left(K_\xi * u\right)\right\}*u\:.
\end{equation*}
Now defining,
\begin{equation*}
f:= \mu^{1/m} \left(K_\xi *u\right)\:,
\end{equation*}
we note that $f \in L^1(G//K)$ is real-valued and then rewrite the above as,
\begin{equation} \label{pf2eq2}
u=\left(*^m K_\xi\right)*\left\{V(1-u)\right\} +  \left(*^m f\right)*u\:.
\end{equation}
Next by integrating out $f$ and plugging in the constraint (\ref{constraint}) and (\ref{pf2eq1}), we obtain
\begin{equation} \label{pf2eq3}
\int_G f = \mu^{1/m}\left(\int_G K_\xi\right)\left(\int_G u\right)=\frac{\mu^{1/m}}{\xi \delta}\:,
\end{equation}
and thus in view of the hypothesis (\ref{thm2_eq}), it follows that
\begin{equation*}
\int_G f \in (0,1)\:.
\end{equation*}
Now convolving (\ref{pf2eq2}) with $\mu^{1/m} K_\xi$, we get
\begin{equation*}
f= \mu^{1/m}\left(*^{m+1} K_\xi\right)*\left\{V(1-u)\right\} +  \left(*^{m+1} f\right)\:.
\end{equation*}
Then as $K_\xi$ is non-negative and from the hypothesis, we also have that $V$ is non-negative and $u \le 1$ almost everywhere on $G$, it follows that
\begin{equation*}
f \ge *^{m+1}f\:.
\end{equation*}
Then in view of (\ref{pf2eq3}), we note that Theorem \ref{thm1} is applicable for $f$ to conclude that $f$ is non-negative. Now define for any real-valued $h$,
\begin{equation*}
h_{-}(x):=\begin{cases}
	 -h(x)\:  &\text{ if } h(x)< 0  \\
	0\:  &\text{ if } h(x) \ge 0 \:,
	\end{cases}
\end{equation*}
and also, 
\begin{equation*}
h_{+}(x):=\begin{cases}
	 0\:  &\text{ if } h(x)< 0  \\
	h(x)\:  &\text{ if } h(x) \ge 0 \:,
	\end{cases}
\end{equation*}
and thus $h=h_+-h_-$. Then by (\ref{pf2eq2}) and non-negativity of $K_\xi\:,\: V\:,\: 1-u$ and $f$, it follows that
\begin{eqnarray*}
u_{-} &=& \left[\left(*^m K_\xi\right)*\left\{V(1-u)\right\} +  \left(*^m f\right)*u\right]_{-} \\
& \le &  \left[ \left(*^m f\right)*u\right]_{-} \\
&=& \left[ \left(*^m f\right)*u_+ - \left(*^m f\right)*u_-\right]_{-} \\
& \le & \left(*^m f\right)*u_-\:.
\end{eqnarray*}
Integrating the above inequality and then invoking (\ref{pf2eq3}) yields
\begin{equation*}
\int_G u_- \le {\left(\int_G f \right)}^m \left(\int_G u_- \right)  = \frac{\mu}{\xi^m \delta^m} \left(\int_G u_- \right) \:.
\end{equation*} 
The above inequality combined with the hypothesis (\ref{thm2_eq}) yields
\begin{equation*}
\int_G u_- = 0\:,
\end{equation*}
which in turn implies that $u_- = 0$ almost everywhere on $G$ and hence $u \ge 0$ almost everywhere on $G$. This completes the proof of Theorem \ref{thm2}.
\end{proof}

\section*{Acknowledgements} 
The author is thankful to Prof. Swagato K. Ray for bringing \cite{CJLL} to his attention. The author is supported by a research fellowship of Indian Statistical Institute.

\bibliographystyle{amsplain}

\end{document}